\newcommand{\R}{\mathbb{R}}
\newcommand{\N}{\mathbb{N}}
\newcommand{\Z}{\mathbb{Z}}
\newcommand{\e}{\epsilon}
\DeclareMathOperator{\area}{area}
\DeclareMathOperator{\vol}{vol}
\newtheorem{thm}{Theorem}
\newtheorem*{thm*}{Theorem}
\newtheorem*{namedthm}{\namedthmname}
\newtheorem{lem}{Lemma}
\newtheorem*{lem*}{Lemma}
\newtheorem{prop}{Proposition}
\newtheorem{coro}{Corollary}
\newtheorem*{coro*}{Corollary}
\theoremstyle{definition}
\newtheorem{defi}{Definition}
\newtheorem{rmk}{Remark}
\newcounter{namedthm}
\newenvironment{named}[1]
  {\def\namedthmname{#1}%
   \refstepcounter{namedthm}%
   \namedthm\def\@currentlabel{#1}}
  {\endnamedthm}
\title{Volume spectrum of fiber bundles and the widths of Berger spheres}
\author{Jingwen Chen\textsuperscript{1}, Pedro Gaspar\textsuperscript{2}}
\address{\parbox{\linewidth}{
\textsuperscript{1} Department of Mathematics, University of Pennsylvania, \\
David Rittenhouse Lab,
209 South 33rd Street,
Philadelphia, PA 19104 \\ \textsuperscript{2} Facultad de Matem\'aticas, Pontificia Universidad Cat\'olica de Chile \\ Avenida Vicuña Mackenna 4860, Santiago, Chile \smallskip}}
\email{jingwch@sas.upenn.edu, pedro.gaspar@mat.uc.cl}
\begin{document}

\begin{abstract}
We establish that for a fiber bundle $\pi: E \to B$, which is a Riemannian submersion, the volume spectrum of $E$ is bounded above by the product of the volume spectrum of $B$ and the volume of the largest fiber. Specifically, we prove the following inequality:
\[
\omega_p(E,g_E) \leq \left( \sup_{b \in B} \vol_{g_E}(\pi^{-1}(b)) \right) \omega_p(B,g_B).
\]
Furthermore, we extend this result to the phase transition spectrum. In addition, we also obtain lower bounds for the isoperimetric profile of Riemannian fibrations with totally geodesic, spherical fibers in terms of the isoperimetric profile of the product of the base and a sphere. By exploiting connections between volume spectrum, least area minimal surfaces, and the isoperimetric profile, we employ these bounds to compute the low widths of Berger spheres and products of spheres. Notably, our analysis reveals that for sufficiently small $\tau$, the equatorial sphere $S^2$ in the Berger sphere $S^3_{\tau}$ (a $S^1-$bundle over $S^2(\frac{1}{2})$ with fiber length $2\pi \tau$) attains the Simon-Smith $1,2,3,4$ widths but fails to attain any lower widths, in both the Almgren-Pitts setting and the Allen-Cahn setting.
\end{abstract}

\maketitle

\section{Introduction}

Gromov \cite{gromov2002isoperimetry,gromov2006dimension,gromovsing,gromovmorsespectra} established a general framework for studying nonlinear analogs of spectral problems on Riemannian manifolds $(M^n, g)$, later extended by Guth \cite{guthminmax}. Within this framework, he introduced the $p-$widths (alternatively called volume spectrum; see Section \ref{gmtbackground} for precise definitions) as natural geometric counterparts to the eigenvalues of the Laplacian. Almgren-Pitts min-max theory \cite{Almgren,Pitts,MarquesNevesIndex,MarquesNevesRicci} says that these $p-$widths are realized as volumes of minimal hypersurfaces (with a possibly small singular set), which play a crucial role in min-max theory.

Over the past decade, the study of min-max theory and the volume spectrum has become a central topic in geometric analysis, driving remarkable progress in solving longstanding geometric variational problems. This powerful framework has yielded profound results, including the complete proof of the Willmore conjecture by Marques-Neves \cite{MarquesNevesWillmore}; the resolution of Yau conjecture on the existence of infinitely many minimal hypersurfaces by Song \cite{Song} (see also Marques-Neves \cite{MarquesNevesRicci}, Irie-Marques-Neves \cite{IrieMarquesNeves}, Chodosh-Mantoulidis \cite{ChodoshMantoulidis}, Li \cite{YangyangInfinitely} and Zhou \cite{Zhou} for the case of generic or positively curved metrics); and more recently, a new proof of the Smale conjecture for all lens spaces by Ketover-Liokumovich \cite{ketover2025smale}; the proof of the existence of four distinct embedded minimal two-spheres in the three dimensional sphere with a bumpy metric or a metric with positive Ricci curvature by Wang-Zhou \cite{wangzhouexistence} (we mention also the follow up work on the existence of embedded tori by Chu-Li \cite{chu2024existence} and Li-Wang \cite{li2024existence}).

Despite the rich applications of the volume spectrum, explicit computations of the $p$-widths remain relatively scarce. For dimension $n=2$, the $p$-widths have been computed for $S^2$ and $\R P^2$ in \cite{CMSurfaces, marx2025p}, respectively. Very recently, low widths of hyperbolic surfaces have been computed by Lima \cite{lima2025first} (see also \cite{mantoulidis2025almgren}), and certain widths of convex polygons have been computed by Chodosh-Cholsaipant \cite{chodoshCholsaipant}. However, known results become significantly more limited in higher dimensions. For $n \geq 3$, only certain low-width cases have been computed for specific classes of Riemannian manifolds, including the round spheres, balls, projective spaces, lens spaces, see e.g. \cite{nurser2016low, CPChuLi, donato2022first, batista2022min, luna2019compact, batista2023short}.

The computation of widths presents substantial challenges even in the simplest cases, the $1-$width. For general Riemannian manifolds $(M^n, g)$, determining the first width remains particularly difficult. For $3 \leq n \leq 7$, Mazet and Rosenberg \cite{mazetrosenberg} established a classification theorem: any least-area minimal hypersurface must be either stable or obtained through min-max construction (see also \cite{zhoupositive} for the case of manifolds of positive Ricci curvature and any dimension $n\geq 3$). Building on this result, we observe that when stable minimal hypersurfaces are absent, any minimal hypersurface bounding an isoperimetric domain must have least area among all minimal hypersurfaces (see Lemma \ref{leastarea_isoperimetric} for a detailed statement). This observation suggests an alternative computational approach -- determining the first width through solutions of the corresponding isoperimetric problem.\medskip

In this paper, we investigate the relationship between the volume spectra of Riemannian bundles and the volume spectra of their base spaces and fibers. Our first main result is:

\begin{named}{Theorem A} \label{thm:width_fibration}
Let $\pi\colon E \to B$ be a fiber bundle, with $E$ and $B$ compact, and endowed with Riemannian metrics $g_E$ and $g_B$ that make $\pi$ a Riemannian submersion. Then the volume spectra of $(E,g_E)$ and $(B,g_B)$ satisfy:
    \[
    \omega_p(E,g_E) \leq \left( \sup_{b \in B} \vol_{g_E}(\pi^{-1}(b)) \right) \omega_p(B,g_B).
    \]
\end{named}

We provide two proofs for this inequality: one using only tools from Geometric Measure Theory and the \emph{fiber integration map} for differential forms (see \cite{GHV}):
\[
I_\pi ={\textstyle \fint_F}\colon \Omega^{k+\dim B}(E) \to \Omega^k(B),
\]
and an independent proof using the relation between the volume spectrum and the \emph{phase transition spectrum} \cite{GasparGuaraco,Dey} and the following inequality:

\begin{named}{Theorem B} \label{thm:phasetransition}
Let $\pi\colon E \to B$ be as in \ref{thm:width_fibration}. Then for every $\epsilon>0$, the phase transition spectra of $(E,g_E)$ and $(B,g_B)$ satisfy:
    \[
    c_\epsilon(p;E,g_E) \leq \left( \sup_{b \in B} \vol_{g_E}(\pi^{-1}(b)) \right) c_\epsilon(p;B,g_B).
    \]
\end{named}

The numbers $c_\e(p;M,g)$ are critical values for the \emph{Allen-Cahn energy functional}:

\[E_{\epsilon}(u):= \int_{M} \left(\frac{\epsilon}{2} |\nabla_{g} u|^2 + \frac{1}{\epsilon} W(u)\right) d\mu_{g}, \ \ u \in W^{1,2}(M).\]
The asymptotic behavior of this functional as $\e \to 0^+$ was studied by Modica and Mortola \cite{Modica,MM} within the framework of $\Gamma$-convergence. Roughly speaking, they proved that the Allen–Cahn energy functional $\Gamma$-converges to the perimeter functional (times a constant depending only on the real-valued double-well potential $W$), a generalization of the $(n-1)-$dimensional volume defined on the space of sets of finite perimeter. This result allows one to use the Allen-Cahn energy functional to study the area functional, and conversely. We refer to Section \ref{subsec:ac} for the precise definitions related to the Allen-Cahn functional and the volume spectrum.

\ref{thm:width_fibration} and \ref{thm:phasetransition} provide upper bounds for the widths of the fiber bundle. Obtaining lower bounds is considerably more challenging. We study the sphere bundle $\pi\colon(E,g_E) \to (B,g_B)$, with totally geodesic fibers $E_b = \pi^{-1}(b)$ isometric to the round sphere $S^r$. By employing fiberwise symmetrization techniques, we compare the isoperimetric profiles of $E$ and $B\times S^r$. We will denote by $S^r$ the $r$-dimensional unit sphere, and by $S^r(\rho)$ the $r$-dimensional sphere of radius $\rho$.

Our second main result is:

\begin{named}{Theorem C} \label{isoperimetric bounds}
For $\rho>0$, let $g_E^\rho$ be the Riemannian metric on $E$ obtained by scaling the fibers of $\pi$ by $\rho^2$. The isoperimetric profiles of $(E,g_E^\rho)$ and $B\times S^r(\rho)$ satisfy
\[
I_{(E,g_E^\rho)} (t) \geq I_{(B\times S^r(\rho))} (t), \qquad \text{for all} \ t \in (0,\vol(E,g_E^\rho)).
\]   
\end{named}

For a more precise formulation, see Theorem \ref{thm:lower_bound}. \medskip

Based on Lemma \ref{leastarea_isoperimetric}, we can use these methods to get lower bounds for the first width of $E$ in some specific cases. As an application of our main results, we compute the low-widths of Berger spheres and products of spheres.

The Berger spheres $S^3_\tau$ are important examples of homogeneous $3$-manifolds which can be roughly described as a smooth family of deformations of the round sphere -- see Section \ref{subsec:berger} for the precise description of this family of metrics. From the min-max perspective, our motivation comes from recent work by Ambrozio-Marques-Neves \cite{AmbrozioMarquesNeves1,AmbrozioMarquesNeves2}, in which they study rigidity phenomena for the volume spectrum, and characterize \emph{surface Zoll metrics} in terms of the Simon-Smith variant of the widths. As an application of \ref{thm:width_fibration} and \ref{isoperimetric bounds}, we will show that:

\begin{named}{Corollary D} \label{corollary Berger} For $\tau \in (0,\frac{1}{\pi})$, the first $3$ widths of the Berger sphere $S^3_\tau$ are achieved by the minimal Clifford tori in $S^3_\tau$. Moreover, if $\tau$ is sufficiently small, then the equatorial spheres in $S^3_\tau$ cannot achieve low $p$-widths $\omega_p(S^3_\tau)$.
\end{named} 
We observe that this Corollary contrasts sharply with the Simon-Smith setting, in which equatorial spheres achieve the corresponding critical values $\sigma_i(S^3_\tau)$, for $i=1,2,3,4$.\medskip

In the case of products of spheres $S^{n_1}\times S^{n_2}(a)$, we compute the first few widths as follows:

\begin{named}{Corollary E} \label{corollary product spheres}
Suppose $n_1 \geq n_2 \geq 2$ and $a>0$. There exist positive numbers $a_-(n_1,n_2)<a_+(n_1,n_2)$ such that:
\begin{enumerate}
    \item[(a)] if $a\in (0,a_-(n_1,n_2)]$, then $$\omega_1(S^{n_1}\times S^{n_2}(a)) = \ldots = \omega_{n_1+1}(S^{n_1}\times S^{n_2}(a)) = \mathrm{vol}(S^{n_1-1} \times S^{n_2}(a)).$$
    \item[(b)] if $a\in [a_-(n_1,n_2),\infty)$, then $$\omega_1(S^{n_1}\times S^{n_2}(a)) = \ldots = \omega_{n_2+1}(S^{n_1}\times S^{n_2}(a)) = \mathrm{vol}(S^{n_1} \times S^{n_2-1}(a)).$$
\end{enumerate}
Moreover, if $n_1 + n_2 \leq 7$, then we can choose $a_-(n_1,n_2) = a_+(n_1,n_2) = \frac{|S^{n_2}|/|S^{n_2-1}|}{|S^{n_1}|/|S^{n_1-1}|}$.
\end{named}

We refer to subsection \ref{width product spheres} for the explicit description of the constants $a_-(n_1,n_2)$ and $a_+(n_1,n_2)$, and observe that this result recovers the computation of the low widths by Batista-Martins \cite{BM} and H. Chen \cite{ChenStability,ChenH} under the same assumptions on the dimensions $n_1$ and $n_2$ and the radius $a$.

Our work provides new tools to estimate the min-max widths, and we expect that our results could apply to other geometrically interesting spaces, such as the unit tangent bundle to compact Riemann surfaces and other homogeneous spaces.

\subsection*{Organization}

The paper is organized as follows.

In Section $2$, we review the background on geometric measure theory and Almgren-Pitts min-max theory, Allen-Cahn equation and the phase transition spectrum, and Fiber integration map, and give a proof of the relation between least area minimal surfaces and isoperimetric domains.

In Section $3$, we use the fiber integration map to connect the sweepouts of the fiber bundle and the sweepouts of the base, hence proving  \ref{thm:width_fibration}. 

In Section $4$, we extend our results to the phase transition spectrum and prove \ref{thm:phasetransition}.

In section $5$, we apply the fiberwise symmetrization method to study the isoperimetric profile of sphere bundles, and prove \ref{isoperimetric bounds}.

Finally, in Section $6$, we apply our findings to compute the low widths of some specific Riemannian manifolds, the Berger spheres and the product of spheres.

\subsection*{Acknowledgements}

We would like to thank Andr\'e Neves for inspiring discussions and suggestions. We also thank Xin Zhou for helpful discussions. PG was supported by ANID (Agencia Nacional de Investigaci\'on y Desarrollo) FONDECYT Iniciaci\'on grant.

\section{Preliminaries}

\subsection{Currents and sweepouts} \label{gmtbackground}

In this subsection, we recall the definitions and preliminary results about geometric measure theory and Almgren-Pitts min-max theory. Standard references are the lecture notes of Simon \cite{simonbook} and the classic book by Federer \cite{Federer}.

Let $(M,g)$ be an orientable compact Riemannian manifold. Let $\Omega^k(M)$ denote the space of smooth $k-$forms in $M$. A $k-$dimensional current (briefly called an $k-$current) in $M$ is a continuous linear functional on $\Omega^k(M)$. The set of such $k-$currents will be denoted by $\mathcal{D}_k(M)$. Note that there is a current $[\![M]\!]$ induced by integration over the total space $M$. We will consider the following spaces of currents in $M$:

\begin{itemize}
    \item $\mathcal{R}_k(M)$: the space of integer rectifiable $k-$currents (see \cite[4.1.24]{Federer}).
    \item $\mathcal{F}_k(M)$: the space of integral \emph{flat $k-$chains} in $M$, defined by \[\mathcal{F}_k(M)=\{T = R + \partial S: R \in \mathcal{R}_k(M), S \in \mathcal{R}_{k+1}(M)\}.\]
    \item $\mathbf{I}_k(M)$: the space of $k-$dimensional integral currents, namely  \[\{T \in \mathcal{R}_k(M) \colon \partial T \in \mathcal{R}_{k-1}(M)\}.\]
    \item $\mathcal{Z}_k(M)$: the space of $k-$cycles in $M$, that is those $T \in \mathbf{I}_k(M)$ with $\partial T = 0$.
\end{itemize}

We also recall the definition of modulo $2$ flat chains from \cite[4.2.26]{Federer} in a compact manifold $M$ (see also \cite{MarcheseStuvard}). If $T_1$ and $T_2$ are flat $k$-chains, then we say $T_1$ are $T_2$ \emph{congruent modulo 2} if there exists a flat $k$-chain $Q$ such that $T_1-T_2=2Q$. We will follow Federer's notation and write $(T)^2$ for the congruence class of a flat chain $T$. Since the boundary operator preserves these congruence classes, it induces an operator between modulo $2$ chains. 

The spaces we will work within this paper are:

\begin{itemize}
    \item $\mathcal{R}_k(M;\Z_2)$: the space congruence classes of rectifiable $k$-currents.
    \item $\mathbf{I}_k(M;\Z_2)$: the space of $k-$dimensional mod $2$ flat chains $M$, that is \[\mathbf{I}_k(M;\Z_2) = \{\tau \in \mathcal{R}_k(M;\Z_2) \colon \partial \tau \in \mathcal{R}_{k-1}(M;\Z_2)\}.\]
    \item $\mathcal{Z}_k(M; \Z_2)$: the space of mod $2$ $k-$cycles, that is $\tau \in \mathbf{I}_k (M;\Z_2)$ with $\partial \tau = 0 \in \mathcal{R}_{k-1}(M;\Z_2)$.
    \item the closure $\mathcal{V}_k(M)$, in the weak topology, of the space of $k-$dimensional rectifiable varifolds in $\R^L$ with support contained in $M$.
\end{itemize}

Given $T \in \mathbf{I}_k(M)$ or $\mathbf{I}_k(M,\Z_2)$, the associated integral varifold and Radon measure in $M$ are denoted by $|T|$ and $\|T\|$ respectively. Given $V \in \mathcal{V}_k(M)$, $\|V\|$ denotes the Radon measure in $M$ associated with $V$. The above spaces come with several relevant metrics. To introduce them, recall that for a differential form $\eta$ defined on a Riemannian manifold $(M,g)$, we denote by $\|\eta(x)\|$ the \emph{comass} of the alternating $k$-form in $\eta(x)\in \Lambda^kT^*M$ with respect to the inner product induced by $g_x$ on alternating bundles, that is:
\[\|\eta(x)\| = \sup\left\{ \left\langle\eta(x),\xi\right\rangle\colon \xi \in \Lambda^kT_xM, \ \xi\ \text{simple}, \ |\xi|_{g_x}\leq 1\right\}.\]

The \emph{mass} of $T \in \mathbf{I}_k(M)$, defined by
\[\mathbf{M}(T) = \sup \{T(\phi): \phi \in \Omega^k(M), \|\phi\| \leq 1\},\]
induces the metric $\mathbf{M}(S,T) = \mathbf{M}(S-T)$ on $\mathbf{I}_k(M)$. 

The \emph{flat metric} is defined by
\[\mathcal{F}(S,T) = \inf \{\mathbf{M}(P)+\mathbf{Q}: S-T=P+\partial Q, P \in \mathbf{I}_k(M), Q \in \mathbf{I}_{k+1}(M)\},\]
for $S,T\in \mathbf{I}_k(M)$. We also use $\mathcal{F}(T) = \mathcal{F}(T,0)$. Note that
\begin{equation*}
\mathcal{F}(T) \leq \mathbf{M}(T) \text{ 
 for all } T \in \mathbf{I}_k(M).   
\end{equation*}

The $\mathbf{F}-$metric on $\mathcal{V}_k(M)$ is defined as in Pitts book \cite{Pitts}:
\[\mathbf{F}(V,W) = \sup \{V(f)-W(f): f\in C_c(G_k(\R^L)), |f|\leq 1, \text{Lip}(f) \leq 1\},\]
for $V,W \in \mathcal{V}_k(M)$. The $\mathbf{F}-$metric induces the varifold weak topology on $\mathcal{V}_k(M)$, and it satisfies
\[\mathbf{F}(|S|,|T|) \leq \mathbf{M}(S-T) \text{    for all S,T} \in \mathbf{I}_k (M).\]

Finally, the $\mathbf{F}-$metric on $\mathcal{R}_k(M)$ is defined by
\begin{equation} \label{F-metric}
    \mathbf{F}(S,T) = \mathcal{F}(S-T) + \mathbf{F}(|S|,|T|).
\end{equation}

We assume that $\mathbf{I}_k(M)$ and $\mathcal{Z}_k(M)$ both have the topology induced by the flat metric. If endowed with the $\mathbf{F}-$metric, we will denote them by $\mathbf{I}_k(M;\mathbf{F})$ and $\mathcal{Z}_k(M;\mathbf{F})$, respectively.\medskip

For mod $2$ flat chains $T \in \mathcal{R}_k (M)$, its modulo 2 mass $\mathbf{M}^2 (T)$ is defined as the least $t \in \R \cup \{+\infty\}$ such that for every $\e > 0$, there exists a rectifiable current $R \in \mathcal{R}_{k}(M)$ with
\begin{equation} \label{mod v mass}
    \mathcal{F}^2 (T-R) \leq \e,\quad  \text{and} \quad \mathbf{M}(R) \leq t+ \e.
\end{equation}

In $1962$, Almgren in his thesis \cite{Almgren} proved the following natural isomorphism:
\[\pi_k(\mathcal{Z}_n(M^{n};\Z_2), 0) \cong H_{k+n-1}(M^{n};\Z_2),\]
and later, it was shown that $\mathcal{Z}_n(M^{n};\Z_2)$ is weakly homotopic to $\R P^{\infty}$, see \cite{MarquesNevesMultiplicity}. Let's denote the generator of $H^1(\mathcal{Z}_n(M^{n};\Z_2); \Z_2) = \Z_2$ by $\overline{\lambda}$.

\begin{defi}[\cite{Pitts,MarquesNevesMultiplicity}]
Given a closed Riemannian manifold $(M^{n},g)$, a continuous map $\Phi: X \to \mathcal{Z}_n(M^{n};\mathbf{F};\Z_2)$ is called a \emph{p-sweepout}, provided that $X$ is a finite dimensional compact cubical complex, and $\Phi^* (\overline{\lambda}^p) \neq 0$ in $H^p(X,\Z_2)$. 

We denote by $\mathcal{P}_p(M)$ the set of all $p-$sweepouts that are continuous in the flat topology and have no concentration of mass. This means that for the quantity
\[\textbf{m}(\Phi, r) = \sup \{\|\Phi(x)\| (B_r(p)): x \in X, p \in M\},\]
we require $\lim\limits_{r \to 0} \textbf{m} (\Phi, r) = 0$, where $B_r(p)$ denotes the open geodesic ball in $M$ of radius $r$ centered at $p \in M$.
\end{defi}

Next, we state some essential results about the volume spectrum of a closed Riemannian manifold $(M^n,g)$.

\begin{defi}
The Almgren-Pitts (min-max) \emph{p-width} $\omega_p(M,g)$ is
\[\omega_p(M,g):= \inf\limits_{\Phi \in \mathcal{P}_p(M)} \sup\limits_{x \in X} \mathbf{M}(\Phi(x)).\]
\end{defi}

Liokumovich-Marques-Neves \cite{LMNWeyl} proved that the volume spectrum satisfies a Weyl law:

\begin{thm}
There exists a constant $a(n)>0$ such that for every compact Riemannian manifold $(M^{n},g)$ with (possibly empty) boundary, we have
\[\lim\limits_{p \to \infty} \omega_p(M) p^{-\frac{1}{n}} = a(n) \vol (M)^{\frac{n-1}{n}}.\]
\end{thm}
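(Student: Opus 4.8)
The plan is to prove the two one-sided bounds
$$\limsup_{p\to\infty}\omega_p(M)\,p^{-1/n}\le a(n)\,\vol(M)^{\frac{n-1}{n}}\qquad\text{and}\qquad\liminf_{p\to\infty}\omega_p(M)\,p^{-1/n}\ge a(n)\,\vol(M)^{\frac{n-1}{n}}$$
separately, after extracting the dimensional constant $a(n)$ from one model domain, the Euclidean unit cube $Q=[0,1]^n$ (which, being a manifold with boundary, also handles the boundary case; alternatively one doubles $M$ along $\partial M$). Two structural inputs would be established first, both working with the min-max widths $\omega^{\mathrm{rel}}_q$ of the relative (free-boundary) cycle spaces of subdomains. (i) A \emph{cup-length subadditivity}: if $\Omega_1,\dots,\Omega_N\subset M$ are pairwise disjoint open sets whose closures cover $M$ up to a null set, then relative $p_i$-sweepouts of $\mathcal Z_n(\overline{\Omega_i},\partial\Omega_i;\Z_2)$ that match along common faces can be glued into a single sweepout of $\mathcal Z_n(M;\Z_2)$, giving an inequality of the form $\omega_p(M)\le\sum_i\omega^{\mathrm{rel}}_{p_i}(\Omega_i)$ valid when the $p_i$ are comparable to one another. (ii) A \emph{Lusternik--Schnirelmann estimate}: for disjoint open $U_1,\dots,U_N\subset M$ and any $q_1,\dots,q_N\in\N$ with $q_1+\cdots+q_N\le p$, one has $\omega_p(M)\ge\sum_i\omega^{\mathrm{rel}}_{q_i}(U_i)$; indeed, the restriction maps $\mathcal Z_n(M;\Z_2)\to\mathcal Z_n(\overline{U_i},\partial U_i;\Z_2)$ pull the generator $\overline{\lambda}$ back to $\overline{\lambda}$, so if $\Phi$ is a $p$-sweepout and $A_i=\{x:\mathbf M(\Phi(x)\llcorner U_i)<\omega^{\mathrm{rel}}_{q_i}(U_i)\}$ then $\Phi^*(\overline{\lambda}^{q_i})$ vanishes on $A_i$; were $X=\bigcup_iA_i$, the product $\Phi^*(\overline{\lambda}^{q_1+\cdots+q_N})$ would vanish, contradicting $\Phi^*(\overline{\lambda}^{p})\ne0$, so some $x^*$ satisfies $\mathbf M(\Phi(x^*)\llcorner U_i)\ge\omega^{\mathrm{rel}}_{q_i}(U_i)$ for all $i$, whence $\mathbf M(\Phi(x^*))\ge\sum_i\omega^{\mathrm{rel}}_{q_i}(U_i)$.

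Next I would handle the cube and define $a(n)$. Applying (i) to the subdivision of $Q$ into $N$ congruent subcubes, together with the scaling law $\omega_p(\text{cube of side }s)=s^{\,n-1}\,\omega_p(Q)$ and (ii) in the reverse direction, shows that $p\mapsto\omega_p(Q)$ obeys an approximate subadditivity, so that a Fekete-type argument gives convergence of $\omega_p(Q)\,p^{-1/n}$; I \emph{define} $a(n):=\lim_p\omega_p(Q)\,p^{-1/n}$. Finiteness comes from the ``bend-and-cancel'' sweepouts of the cube due to Gromov and Guth ($\omega_p(Q)\le Cp^{1/n}$), and positivity from the elementary bound $\omega_p\ge c\,p^{1/n}$ obtained by combining (ii) with the Euclidean isoperimetric inequality. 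The same reasoning, applied to cubes of other sizes and — via monotonicity and the $(1+\e)$-bi-Lipschitz invariance of $\omega_p$ and $\omega^{\mathrm{rel}}_p$ uniform in $p$ — to any domain $(1+\e)$-bi-Lipschitz to a Euclidean cube $U$, shows that the relative widths of such $U$ obey the same asymptotics $\omega^{\mathrm{rel}}_q(U)=(a(n)+o(1))\vol(U)^{\frac{n-1}{n}}q^{1/n}$ as $q\to\infty$, up to $(1+\e)$-factors, with the \emph{same} constant $a(n)$.

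For general $(M,g)$, fix $\e>0$ and, using compactness and smoothness, choose pairwise disjoint open sets $U_1,\dots,U_N\subset M$ with $\overline{U_1}\cup\cdots\cup\overline{U_N}=M$ up to a null set, each $(1+\e)$-bi-Lipschitz to a Euclidean cube of volume $v_i$, with $V:=v_1+\cdots+v_N\ge(1-\e)\vol(M)$ and $\max_iv_i\to0$ as $N\to\infty$. For the \emph{upper bound}: apply (i) with $p_i\approx p\,v_i/V$ (so $\sum p_i\approx p$ and the $p_i$ are comparable), obtaining $\omega_p(M)\le\sum_i\omega^{\mathrm{rel}}_{p_i}(U_i)\le(1+\e)^{n-1}\sum_i(a(n)+o(1))\,v_i^{\frac{n-1}{n}}p_i^{1/n}$; a direct computation with this choice gives $\sum_iv_i^{\frac{n-1}{n}}p_i^{1/n}=p^{1/n}V^{-1/n}\sum_iv_i=p^{1/n}V^{\frac{n-1}{n}}\le p^{1/n}\vol(M)^{\frac{n-1}{n}}$, so letting $p\to\infty$, then $N\to\infty$, then $\e\to0$ yields the $\limsup$ bound. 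For the \emph{lower bound}: apply (ii) with the same choice $q_i\approx p\,v_i/V$ (so $\sum q_i\le p$), obtaining $\omega_p(M)\ge\sum_i\omega^{\mathrm{rel}}_{q_i}(U_i)\ge(1+\e)^{-(n-1)}\sum_i(a(n)+o(1))\,v_i^{\frac{n-1}{n}}q_i^{1/n}\ge(1+\e)^{-(n-1)}(a(n)+o(1))\,p^{1/n}((1-\e)\vol(M))^{\frac{n-1}{n}}$; again sending $p\to\infty$, $N\to\infty$, $\e\to0$ gives the $\liminf$ bound. Combining the two completes the proof.

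The main obstacle is the precise combinatorial and geometric bookkeeping behind (i): one must construct a fine subdivision of $(M,g)$ into cells that are simultaneously nearly isometric to Euclidean cubes, of prescribed near-equal volumes, and meeting along faces along which the local relative sweepouts can be matched, and then verify that the glued family is a genuine $p$-sweepout — this is where mod-$2$ binomial coefficients (Lucas' theorem) enter, forcing the parameters $p_i$ to be suitably distributed so that no degenerate allocation is possible — all while preserving the no-concentration-of-mass condition. The second delicate point is the $(1+\e)$-bi-Lipschitz stability of $\omega_p$ and $\omega^{\mathrm{rel}}_p$ \emph{uniformly in $p$}, needed to justify sending $\e\to0$ only after the $p\to\infty$ limit, together with the identification of the relative-cube asymptotic constant with $a(n)$.
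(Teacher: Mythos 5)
The paper does not prove this theorem: it is stated as an imported result and attributed to Liokumovich--Marques--Neves \cite{LMNWeyl}, and used only as a black box elsewhere in the text. So there is no in-paper proof to compare against; your sketch can only be judged against the cited LMN argument.

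With that caveat, your outline is an accurate high-level rendering of the LMN strategy: a Lusternik--Schnirelmann lower bound exploiting the cup structure on $H^*(\mathcal Z_n(M;\Z_2);\Z_2)$, a gluing/subadditivity upper bound over decompositions into Lipschitz pieces, the Euclidean cube together with its scaling law as the model from which $a(n)$ is extracted, and a fine decomposition of $(M,g)$ into nearly Euclidean cells to transport the cube asymptotics. Your step (ii) is essentially correct once one verifies that the restriction map $\mathcal Z_n(M;\Z_2)\to\mathcal Z_n(\overline U_i,\partial U_i;\Z_2)$ is well-defined and continuous in the flat topology, that it pulls back the generator $\overline\lambda$ to the generator, and that restriction preserves the no-concentration-of-mass condition --- none of which are automatic but all of which are handled in LMN. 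The genuine gap is step (i): the gluing of relative $p_i$-sweepouts of the $\Omega_i$ into an absolute $p$-sweepout of $M$ with controlled max-mass is precisely the hard technical core of the LMN proof, and it is not mere bookkeeping; one must construct the local sweepouts so that contributions on common faces cancel, so that the relevant cup powers genuinely accumulate (this is where the mod-$2$ combinatorics you mention intervene), and so that no concentration of mass is introduced along the cell boundaries. You correctly flag this as the main obstacle but you defer it rather than establish it, and likewise for the uniform-in-$p$ bilipschitz stability needed to interchange limits. As a blind sketch this is the right shape of argument, but with (i) left open it does not constitute a proof.
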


In addition, we have the following main existence result in the min-max theory.

\begin{thm} \label{widths are achieved} \emph{(\cite{MarquesNevesIndex}, \cite[Theorem C]{Zhou}, \cite[Theorem 1.3]{Yangyang}, \cite[Theorem 1.5]{Dey})} For every $p \in \N$, there exists a stationary integral varifold $V$ with $\mathrm{spt}\|V\| = \Sigma$ such that
    \begin{enumerate}
        \item[(i)] $\omega_p(M,g) = \|V\|(M)$,
        \item[(ii)] $\Sigma$ is a minimal hypersurface with \emph{optimal regularity}, namely it is smooth away from a singular set of Hausdorff dimension at most $(n-8)$.
        \item[(iii)] The Morse index of $\Sigma$ satisfies $\mathrm{ind}(\Sigma) \leq p$.
    \end{enumerate}
\end{thm}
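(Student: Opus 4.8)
Since Theorem~\ref{widths are achieved} is a synthesis of the works cited after its statement, what follows is an outline of the structure of the proof rather than a self-contained argument. The plan is to run the Almgren--Pitts min-max scheme in the space $\mathcal{Z}_n(M;\Z_2)$, and then upgrade the resulting weak object using the regularity and index machinery developed by Marques--Neves and its refinements. First I would fix $p \in \N$, take a minimizing sequence $\{\Phi_i\} \subset \mathcal{P}_p(M)$ with $\sup_x \mathbf{M}(\Phi_i(x)) \to \omega_p(M,g)$, and pass to the associated discrete sweepouts into $\mathcal{Z}_n(M;\mathbf{F};\Z_2)$ using the discretization/interpolation theorems of Marques--Neves (which move between continuous flat sweepouts with no concentration of mass and discrete $\mathbf{F}$-sweepouts while changing the maximal mass only by a vanishing error). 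This produces a critical sequence in the sense of Pitts; the combinatorial ``pull-tight'' procedure followed by the extraction of an almost minimizing varifold yields a stationary integral varifold $V \in \mathcal{V}_n(M)$ with $\|V\|(M) = \omega_p(M,g)$, which is item (i). The nontrivial point here, resolved by Marques--Neves, is that the cohomological constraint $\Phi^\ast(\overline{\lambda}^p) \neq 0$ is preserved under the deformations used in pull-tight, so the min-max value is genuinely realized and not lost to concentration of mass at a point -- this is precisely where the no-concentration-of-mass requirement in the definition of $\mathcal{P}_p(M)$ enters.

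For item (ii), I would use the almost minimizing property of $V$ in sufficiently small annuli (a byproduct of the construction) together with the regularity theory for stable stationary integral varifolds. In ambient dimension $n \leq 7$ this is the Schoen--Simon theory, as used by Pitts, giving smoothness and embeddedness; in general dimension one gets the optimal regularity statement, with $\mathrm{spt}\|V\| = \Sigma$ a smooth embedded minimal hypersurface away from a closed singular set of Hausdorff dimension at most $n-8$, and $V = \sum_i m_i |\Sigma_i|$ an integer combination of its components.

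For the index bound (iii), I would invoke the deformation theorem of Marques--Neves for multiparameter min-max: if $\mathrm{ind}(\Sigma) > p$, one uses the unstable directions of the second variation to locally push down the mass of the sweepout near its maximum set, and a Lusternik--Schnirelmann-type count using the $p$-dimensionality of the cohomological support shows this deformation can be arranged to produce a competitor $p$-sweepout with strictly smaller maximal mass, contradicting the definition of $\omega_p$. To even make sense of $\mathrm{ind}(\Sigma)$ and to run this argument cleanly one needs the multiplicity-one phenomenon -- established by Zhou for bumpy metrics and metrics of positive Ricci curvature, and via the Allen--Cahn regularization by Dey -- so that the min-max varifold is a minimal hypersurface counted with multiplicity one. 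I expect the main obstacle to be exactly this passage from the weak min-max varifold to a smooth hypersurface with controlled index: the interpolation machinery, the almost-minimizing/regularity dichotomy, and the index estimate are each substantial, and the multiplicity-one input is the most delicate ingredient, since without it the ``index'' one controls is that of a possibly higher-multiplicity sheet rather than of $\Sigma$ itself.
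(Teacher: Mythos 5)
The paper does not prove this theorem; it is stated with citations to the literature and treated as an input. Your outline of the Almgren--Pitts scheme, the discretization/interpolation and pull-tight for item (i), and the almost-minimizing/Schoen--Simon/Wickramasekera route to the optimal regularity statement in item (ii), is a fair summary of how those references proceed.

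There is, however, a substantive error in your treatment of the index bound (iii). You claim that ``to even make sense of $\mathrm{ind}(\Sigma)$ and to run this argument cleanly one needs the multiplicity-one phenomenon,'' and that without multiplicity one ``the index one controls is that of a possibly higher-multiplicity sheet rather than of $\Sigma$ itself.'' This is not the case. The Morse index of $\Sigma = \mathrm{spt}\|V\|$ is the index of the support as a (smooth, away from the singular set) embedded minimal hypersurface, i.e.\ the number of negative eigenvalues of the Jacobi operator on normal sections over $\Sigma$; it is well defined independently of the integer densities $m_i$ of $V$ and is unchanged if the multiplicities are higher. The upper bound $\mathrm{ind}(\Sigma) \leq p$ is precisely what Marques--Neves prove in \cite{MarquesNevesIndex} for arbitrary metrics, without invoking multiplicity one: their deformation theorem combined with the almost-minimizing property of the min-max varifold yields the bound on the index of the support directly. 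The multiplicity-one results of Zhou and the Allen--Cahn approach of Dey are strictly stronger statements (that all $m_i = 1$, or that $\mathrm{ind} = p$ with nullity zero for bumpy metrics); they are cited alongside \cite{MarquesNevesIndex} for related reasons (e.g.\ compatibility of the mod $2$ flat-chain formulation and of the Allen--Cahn width with the Almgren--Pitts width), not because multiplicity one is needed to make sense of, or to prove, the inequality $\mathrm{ind}(\Sigma) \leq p$ as stated. As written, your argument for (iii) would lead a reader to think the index bound is conditional on multiplicity one, which it is not.
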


\subsection{Least area minimal surfaces and isoperimetric domains} In our applications, we would like to study certain minimal hypersurfaces that bound isoperimetric domains.

We recall that least area minimal hypersurfaces in closed Riemannian manifolds $(M^n,g)$ were characterized by Mazet-Rosenberg in \cite{mazetrosenberg}, for $3\leq n \leq 7$ (see also Zhou \cite{zhoupositive} for $n \geq 3$ assuming $\mathrm{Ric}_g>0$). Roughly stated, their results say that such minimal hypersurfaces are either stable, or separating min-max minimal hypersurfaces with Morse index 1 that can be obtained from the fundamental class of $M$ in $H_n(M,\Z_2) \simeq {\pi_1}(\mathcal{Z}_{n-1}(M,\Z_2),0)$. We refer to Theorem A in \cite{mazetrosenberg} for details.

Using this characterization, we observe that, in the absence of stable minimal hypersurfaces, a minimal hypersurface that bounds an isoperimetric domain must be of least area. More generally, we have:

\begin{lem} \label{leastarea_isoperimetric}
Let $(M^n,g)$, $3 \leq n \leq 7$, be a closed oriented Riemannian manifold. Suppose there exists a minimal hypersurface $\Gamma$ in $(M,g)$ such that $\Gamma = \partial D$ for some isoperimetric domain $D \subset M$ with $\vol_g(D) \in (0,\vol_g(M))$.

Suppose
    \begin{equation} \label{unstable_leastarea}
        |\Gamma| < \inf \left(\hspace{-20pt}\begin{array}{c}\{|\Sigma| \ \colon \ \Sigma \subset (M,g) \ \text{closed, stable, orientable, minimal hypersurface}\} \\ \ \qquad \cup \ \{2|\Sigma| \ \colon \ \Sigma \subset (M,g) \ \text{closed, stable, non-orientable, minimal hypersurface}\} \end{array} \right),
    \end{equation}
where the infimum is interpreted as $+\infty$ if $(M,g)$ admits no stable minimal hypersurfaces. Then $\Gamma$ is a least area minimal hypersurface, namely $|\Gamma|\leq | \Sigma|$ for any closed, smoothly embedded minimal hypersurface $\Sigma$ in $(M,g)$. In particular $W(M,g) = \omega_1(M,g)=|\Gamma|$, where $W(M,g)$ denotes the Almgren-Pitts width of $(M,g)$.
\end{lem}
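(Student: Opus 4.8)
The plan is to combine the Mazet–Rosenberg classification of least-area minimal hypersurfaces with the min-max characterization of $\omega_1$ and a comparison between the widths $W(M,g)$ and $\omega_1(M,g)$. First I would recall that, by \cite{mazetrosenberg}, for $3 \leq n \leq 7$ any least-area minimal hypersurface $\Sigma_0 \subset (M,g)$ is either (a) stable, or (b) a separating min-max minimal hypersurface of Morse index $1$ arising from the fundamental class in $H_n(M,\Z_2)\simeq \pi_1(\mathcal{Z}_{n-1}(M;\Z_2),0)$; moreover in case (b) one has $|\Sigma_0| = W(M,g)$, where $W(M,g)$ is the one-parameter Almgren–Pitts width associated to sweepouts detecting the generator $\overline\lambda$. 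The key point is that such a least-area minimal hypersurface always exists (by the regularity and compactness theory used in \cite{mazetrosenberg}), so the infimum defining ``least area'' is attained.

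The heart of the argument is to rule out case (a) under hypothesis \eqref{unstable_leastarea}. Suppose the least-area minimal hypersurface $\Sigma_0$ were stable. If $\Sigma_0$ is orientable (hence itself an embedded hypersurface bounding in $M$ with multiplicity one), then $|\Sigma_0|$ is at least the orientable-stable infimum on the right-hand side of \eqref{unstable_leastarea}; if $\Sigma_0$ is non-orientable, then its connected double cover, or equivalently the boundary of a tubular neighborhood, has area $2|\Sigma_0|$ and is realized as a limit of embedded hypersurfaces, so $|\Sigma_0|$ is bounded below by half the non-orientable-stable infimum. Either way $|\Sigma_0|$ is at least the right-hand side of \eqref{unstable_leastarea}. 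But $\Gamma = \partial D$ is by assumption a minimal hypersurface, hence a competitor in the least-area problem, so $|\Sigma_0| \leq |\Gamma|$, contradicting \eqref{unstable_leastarea}. Therefore $\Sigma_0$ is of type (b), and $|\Sigma_0| = W(M,g)$.

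It remains to identify $|\Gamma|$ with $|\Sigma_0|$ and with $\omega_1$. On one hand $|\Sigma_0|\leq|\Gamma|$ since $\Sigma_0$ is least area and $\Gamma$ is minimal. On the other hand, I would use that $\Gamma$ bounds an isoperimetric domain $D$ with $\vol_g(D)\in(0,\vol_g(M))$: this makes $\Gamma$ a separating hypersurface representing the nontrivial class, so the family obtained by continuously deforming $D$ through isoperimetric regions (or more simply, any one-parameter sweepout passing through $\partial D$ and optimized in area) gives $W(M,g)\leq |\Gamma|$; combined with $W(M,g) = |\Sigma_0| \leq |\Gamma|$ and $|\Sigma_0|\le |\Gamma|$ this forces $W(M,g)=|\Gamma|=|\Sigma_0|$, so $\Gamma$ is least area. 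Finally, $\omega_1(M,g)$ is the min-max value over $1$-sweepouts, which coincides with $W(M,g)$ (the one-parameter Almgren–Pitts width detecting $\overline\lambda$ is exactly $\omega_1$), so $\omega_1(M,g) = W(M,g) = |\Gamma|$.

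I expect the main obstacle to be the non-orientable case in the second paragraph: one must argue carefully that a stable non-orientable minimal hypersurface $\Sigma$ cannot have $2|\Sigma| \leq |\Gamma|$, using that the relevant min-max / least-area framework over $\Z_2$ coefficients ``sees'' such hypersurfaces with an effective area of $2|\Sigma|$ (since they do not separate and one passes to the orientable double cover). A secondary technical point is justifying that the least-area infimum over all closed embedded minimal hypersurfaces is attained by a smooth (up to the optimal singular set) hypersurface to which the Mazet–Rosenberg dichotomy applies; this is where the dimensional restriction $n\leq 7$ enters and is handled by their regularity theory, which I would simply cite.
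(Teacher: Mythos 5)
There is a genuine gap in the final step of your argument. After establishing that the least-area minimal hypersurface $\Sigma_0$ is of Mazet--Rosenberg type (b) with $|\Sigma_0| = W(M,g)$, you try to conclude $|\Gamma| = |\Sigma_0|$ from the two facts $W(M,g) = |\Sigma_0| \le |\Gamma|$ and (your claimed) $W(M,g) \le |\Gamma|$. But these are both \emph{upper} bounds on $W$; together they only say $W(M,g) = |\Sigma_0| \le |\Gamma|$, not equality. Nothing in your argument rules out $|\Sigma_0| < |\Gamma|$, which is precisely what must be excluded. Moreover, the intermediate claim that ``deforming $D$ through isoperimetric regions'' gives a sweepout with max area $\le |\Gamma|$ is itself unjustified: the isoperimetric profile at volumes other than $\vol_g(D)$ could well exceed $|\Gamma|$, and in any case the inequality $W \le |\Gamma|$ points in the wrong direction.

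The missing ingredient is how to exploit the isoperimetric property of $D$ to produce a \emph{lower} bound. The paper argues by contradiction: assume $|\Sigma_0| < |\Gamma|$. By \eqref{unstable_leastarea}, $\Sigma_0$ is unstable, so by Proposition~11 of Mazet--Rosenberg it is null-homologous, $\Sigma_0 = \partial\Omega$; since $D$ is isoperimetric and $|\Sigma_0| < |\Gamma| = |\partial D|$, one must have $\vol_g(\Omega) \neq \vol_g(D)$. Now take the \emph{optimal} one-parameter sweepout $\{\Sigma_t = \partial\Omega_t\}_{t \in [-1,1]}$ through $\Sigma_0$ (so $|\Sigma_t| < |\Sigma_0|$ for $t\neq 0$), lifted to Caccioppoli sets with $\Omega_{-1}=\emptyset$, $\Omega_1=M$, and $t\mapsto\vol_g(\Omega_t)$ continuous. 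By the intermediate value theorem there is $t\neq 0$ with $\vol_g(\Omega_t) = \vol_g(D)$, and then the isoperimetric property of $D$ gives $|\Sigma_t| = |\partial\Omega_t| \ge |\partial D| = |\Gamma|$, while the optimality of the sweepout gives $|\Sigma_t| < |\Sigma_0| < |\Gamma|$ --- a contradiction. It is this interplay between the volume-continuity of the sweepout and the isoperimetric lower bound that closes the argument, and your proposal does not contain it. (As a secondary remark, the non-orientable stable case you flag as a worry is indeed delicate --- the hypothesis \eqref{unstable_leastarea} only gives $|\Gamma| < 2|\Sigma_0|$, not $|\Gamma| < |\Sigma_0|$, for a stable non-orientable $\Sigma_0$ --- but the paper's contradiction proceeds via Proposition~11 and the sweepout, not by directly excluding all stable $\Sigma_0$ in the way you sketch.)
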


\begin{proof}
We argue by contradiction. If we suppose otherwise, then the least area minimal hypersurface $\Sigma$ in $(M,g)$ must satisfy $|\Sigma| < |\Gamma|$, and by \eqref{unstable_leastarea} it is unstable. By \cite[Proposition 11]{mazetrosenberg}, $\Sigma$ must be homologous to $0$, that is $\Sigma = \partial \Omega$ for some domain $\Omega \subset M$. Since we assume $\Gamma = \partial D$ for an \emph{isoperimetric domain} $D$ with volume $v$, we have $\vol_g(\Omega) \neq \vol_g(D)$. 

Choose a 1-parameter sweepout of $(M,g)$ associated to the fundamental class of $M$ in $H_n(M,\Z_2) \simeq {\pi_1}(\mathcal{Z}_{n-1}(M,\Z_2),0)$, say $\{\Sigma_t=\partial \Omega_t\}_{t \in [-1,1]}$, such that:

\begin{enumerate}
    \item[(i)] the sweepout is optimal: $\Sigma_0 = \Sigma$ and $|\Sigma_t| < |\Sigma|$ for $t \neq 0$, and
    \item[(ii)] $t\in[-1,1] \mapsto \vol_g(\Omega_t)$ is continuous function. 
\end{enumerate}
Here we choose $\Omega_t$ as a lift of the nontrivial path $t \mapsto \Sigma_t$ to the space of Caccioppoli sets $\mathcal{C}(M)$, so that $\Omega_{(-1)} = 0$ and $\Omega_1 = M$. The existence of such a sweepout follows, e.g., from \cite[Proposition 17]{mazetrosenberg}.

On the other hand, this directly leads to a contradiction. To see this, note that by continuity there is $t \in (-1,1)$ with $t \neq 0$ such that $\vol_g(\Omega_t) = \vol_g(D)$. Consequently:
\[
|\Gamma| > |\Sigma| = |\Sigma_0| > |\Sigma_t| = |\partial \Omega_t| \geq |\partial D| = |\Gamma|.
\]
Here, the first inequality follows from our contradiction assumption, the second inequality follows from the choice of the sweepout, and the last inequality follows from the isoperimetry of $D$.
\end{proof}

\begin{rmk}
We note that condition \eqref{unstable_leastarea} is trivially met if $(M,g)$ contains no stable, orientable minimal hypersurfaces (e.g. if it has positive Ricci curvature) and contains no non-orientable, closed, minimal hypersurfaces (e.g., if $H_{n-1}(M,\Z_2)$ is trivial).    
\end{rmk}

\subsection{Allen-Cahn equation and the phase transition spectrum} \label{subsec:ac}

In this subsection, we consider a closed Riemannian manifold $(M,g)$ and recall some basic facts and results about the \emph{(elliptic) Allen-Cahn equation}:
\begin{equation} \label{AC}
\Delta_g u -  {\textstyle \frac{1}{\e^2}}W'(u)=0 \quad \text{on} \ (M,g). \tag{AC}
\end{equation}
and the phase transition spectrum.

\begin{defi}
A function $W \in C^{\infty}(\R)$ is a \emph{(symmetric) double-well potential} if:
\begin{enumerate}
    \item[(1)] $W$ is nonnegative and vanishes precisely at $\pm1$;
    \item[(2)] $W$ satisfies $W'(0) = 0$, $W''(0) \neq 0$, and $sW'(s) < 0$ for $|s| \in (0, 1)$;
    \item[(3)] $W''(\pm1)>0$;
    \item[(4)] $W(s) = W(-s)$ for all $s \in \R$.
\end{enumerate}
\end{defi}

The standard example of a double-well potential is the function $W(t) = \frac{1}{4}(1 - t^2)^2$. 

\begin{defi}
We define the \emph{Allen-Cahn energy} on $(M,g)$ by:
\[E_{\epsilon}(u):= \int_{M} \left(\frac{\epsilon}{2} |\nabla_{g} u|^2 + \frac{1}{\epsilon} W(u)\right) d\mu_{g}, \ \ u \in W^{1,2}(M),\]
where $d\mu_g$ is the volume measure with respect to $g$. 
\end{defi}

One can check that $u$ is a critical point of $E_{\epsilon}$ if and only if $u$ (weakly) solves \eqref{AC}. We write $\sigma = \int_{-1}^1 \sqrt{W(t)/2} dt$. This is the energy of the \emph{heteroclinic solution} $\mathbb{H}_\e(t)$ of \eqref{AC} on $\R$, that is, the unique bounded solution in $\R$ (modulo translation) such that $\mathbb{H}_\e(t) \to \pm 1$ when $t \to \pm \infty$. We refer to \cite[Section 1.3]{ChodoshMantoulidis} for more information on this one-dimensional solution.

As observed in the introduction, the Allen-Cahn energy is closely related to isoperimetric problems through $\Gamma$-convergence. For every $m \in (-\vol(M,g),\vol(M,g))$, let
    \[
        V_m(M,g) = \left\{ u \in W^{1.2}(M) \colon \int_Mu\,d\mu_g = m\right\}.
    \]
It follows from the direct method of the calculus of variations and elliptic regularity that the infimum of $E_\e$ over functions in $V_m(M,g)$ is achieved by a smooth solution $u_{\e,m} \in V_m(M,g)$ of the nonhomogeneous Allen-Cahn equation $\e\Delta_gu - W'(u)/\e = \lambda_m$, where $\lambda_m$ is a Lagrange multiplier associated to the integral constraint.

If follows from the classical work of Modica \cite{Modica} that $u_{\e,m}$ subconverges as $\e\downarrow 0$ to a function $u_m \in BV(M,g)$ (in the $BV$ sense) which satisfies $W(u_m) = 0$ a.e. on $M$ and for which $M\cap \{u_m=-1\}$ is a solution of the \textit{isoperimetric problem}
\[
\min\left\{\ \mathrm{Per}_{(M,g)}(E)\ \colon\ E \in \mathcal{C}(M)\ ,\ \vol_g(E) = \frac{\vol(M,g)-m}{2} \ \right\},
\]
where $\mathrm{Per}_{(M,g)}(E) = \|D\chi_E\|(M)$ denotes the \emph{perimeter} of the Caccioppoli set $E$ (see \cite[\S 14]{simonbook}). The value above coincides with $I_{(M,g)}(\frac{\vol(M,g)-m}{2})$, where $I_{(M,g)}$ is the isoperimetric profile of $(M,g)$, and satisfies:
    \[ I_{(M,g)}\left(\frac{\vol(M,g)-m}{2}\right) = \frac{1}{2\sigma}\lim_{\e \downarrow 0} E_\e(u_{\e,m}).\]
    
We now briefly state the definition of the phase transition spectrum of $(M,g)$, following the papers \cite{Guaracominmax,GasparGuaraco} where further details can be found. Let $\text{Ind}_{\Z_2}$ denote the the Fadell-Rabinowitz cohomological $\Z_2$-index (see \cite[Section 3.1]{GasparGuaraco} for detailed definition). Let $X$ be a $C^2$ Hilbert manifold which is also a free $\Z_2-$space. For each $p \in \N$, consider the family
\begin{equation*}
\mathcal{F}_p = \{A \subset W^{1,2}(M): A \text{ compact, symmetric}, \text{ Ind}_{\Z_2}(A) \geq p+1\}.
\end{equation*}
One easily verifies that $\mathcal{F}_p$ is a $p-$dimensional $\Z_2-$cohomological family, in the sense defined in \cite[Section 3]{ghoussoub1991location}.

Define $X = W^{1,2}(M) \setminus \{0\}$ and let $\mathcal{F}_p$ be the cohomological family defined as above. We define the $p$-th $\e-$Allen-Cahn width $c_{\e}(p;M,g)$ as:
\begin{equation*}
c_{\e}(p;M,g) := \inf\limits_{A \in \mathcal{F}_p} \sup\limits_{x \in A} E_\e (x), \text{  for  }p \in \N.
\end{equation*}

When the ambient dimension $3 \leq n \leq 7$, it was proved by \cite{GGweyl} that $\lim\limits_{\e \to 0^+} c_{\e}(p;M,g)$ exists.

In \cite{Dey}, Dey compared the Almgren-Pitts width $\omega_p$ and the limit of the $\e-$Allen-Cahn width. 
By \cite[Theorem1.2]{Dey}, the Almgren-Pitts width $\omega_p$ can be obtained as the limit of the $\e$-Allen-Cahn widths $c_{\e}(p;M,g)$, that is
\begin{equation}\label{AP-AC widths}
\frac{1}{2\sigma}\lim_{\epsilon \to 0^+}c_{\e}(p;M,g)= \omega_p(M,g), \forall p \in N.
\end{equation}

\subsection{Fiber integration}
Let $\pi: E \to B$ be a smooth fiber bundle. In this subsection, we will recall the construction of \emph{Thom’s fiber integration map}:
\[
I_\pi ={\textstyle \fint_F}\colon \Omega^{k+r}(E) \to \Omega^k(B),
\]
and refer to \cite[Chapter VII]{GHV} for further details. 

Hereafter, we will denote:
\begin{itemize}
    \item by $E_b = \pi^{-1}(b)$ the fiber at point $b \in B$;
    \item the dimensions of the base and the fibers by $\dim B = n$ and $\dim E_b = r$, respectively;
    \item by $V_E \to E$ the \emph{vertical bundle} of $E\to B$. This is a smooth vector subbundle of $TE$ of rank $r$, whose fiber over $e \in E$ is $(V_E)_e=\ker(D\pi(e)) = T_e(E_{\pi(e)}) \subset T_eE$, the tangent space to the fiber through $e$.
\end{itemize} 

Let $\omega \in \Omega^{k+r}(E)$ and consider the smooth bundle morphism $\Phi_\omega \colon \Lambda^{r}V_E \to \Lambda^k\ T^*B$ (over $\pi \colon E \to B$) given by,
\[
\left\langle \Phi_\omega(e;v_1\wedge \ldots \wedge v_r),\xi_1\wedge\ldots\wedge \xi_k\right\rangle = \omega_e(\bar\xi_1,\ldots,\bar\xi_k,v_1,\ldots,v_r),
\]
for $e\in E$, $v_1,\ldots, v_r \in (V_E)_e = T_e(E_{\pi(e)})$, $\xi_1,\ldots,\xi_k \in T_{\pi(e)}B$, and $\bar \xi_1,\ldots, \bar \xi_k \in T_eE$ such that $D\pi(e)\bar \xi_i = \xi_i$. It follows from $\dim \ker(D\pi(e)) = \dim((V_E)_e) = r$ that this definition is independent of the choice of the vectors $\bar \xi_i$, and hence $\Phi_\omega(e;v_1\wedge \ldots \wedge v_r)$ is a well-defined element of $\Lambda^kT^*_{\pi(e)}B$. The restriction of $\Phi_\omega$ to $\left(\Lambda^k V_E\right)|_{E_b} = \Lambda^k TE_b$ defines a smooth $\Lambda^kT_b^*B$-valued $r$-form over each fiber $E_b$. We define $I_\pi\colon \Omega^{k+r}(E) \to \Omega^k(B)$ by
\[
 \quad I_\pi\omega(b)=\int_{E_b}\Phi_\omega,
\]
where the fiber integral is computed with respect to the orientation induced by the bundle. Equivalently, the value of the $k$-differential form $I_\pi\omega$ on a $k$-tuple of vectors $\xi_1,\ldots, \xi_k \in T_bB$ can be computed by integrating the differential form $\omega^{b; \xi_1\wedge \ldots \wedge \xi_k} \in \Omega^{r}(E_b)$ defined by
    \begin{equation} \label{eq:retr}
        \omega^{b; \xi_1\wedge \ldots \wedge \xi_k}(v_1,\ldots, v_r) = \omega_e(\bar\xi_1,\ldots,\bar\xi_k,v_1,\ldots,v_r), \qquad v_1,\ldots, v_r \in T_e(E_b),
    \end{equation}
for $\bar\xi_i \in T_eE$ chosen as above.\medskip

We record here the local expression for the differential form $I_\pi\omega$. In the case $E=B \times F$ and $\pi$ is the trivial (product) bundle, we note that locally, in coordinate systems $x=(x^1,\ldots, x^n)$ and $y=(y^1,\ldots, y^r)$ for $B$ and $F$, respectively, every $(k+r)$-form $\omega$ can be written as a sum of forms of types:
    \begin{equation}
        \eta=a^{I,J}(x,y) dx^I \wedge dy^J,
    \end{equation}
for $p$- and $q$-multi-indexes $J=(j_1,\ldots, j_p)$ and $K=(k_1,\ldots, k_q)$ with $1\leq j_1<\ldots<j_p\leq n$, $1\leq k_1 < \ldots < k_q \leq r$ and $p+q=k+r$. Since $dx^i(v)=0$ for any vertical vector $v \in (V_E)_{(x,y)} = T_{(x,y)}(E_x) = T_{(x,y)}(\{x\}\times F)$, we have
\[
\left\langle \Phi_{\eta}(z;v_1\wedge \ldots \wedge v_r),D\pi(z)\xi_1\wedge\ldots\wedge D\pi(z)\xi_k\right\rangle = a^{I,J}(x,y) dx^I \wedge dy^J(\xi_1,\ldots,\xi_k,v_1,\ldots,v_r)
=0,\]
whenever $q=|J|<r$, and hence $I_\pi\eta =0$ in this case. On the other hand, if $q=|J|=r$, then $p=k$ and 
\begin{align*}
    \left\langle \Phi_{\eta}(z;\cdot ),D\pi(z)\xi_1\wedge\ldots\wedge D\pi(z)\xi_k\right\rangle = a^{I,J}(x,y)dx^I(\xi_1\wedge \ldots \wedge \xi_k) dy^J,
\end{align*}
thus
\begin{equation} \label{eq:product}
I_\pi\eta = \left(\int_F a(x,y)\,dy^J\right)\ dx^I.
\end{equation}
Since $I_\pi\omega$ is additive with respect to $\omega$, this characterizes this map entirely.\medskip

Among the key properties of the map $I_\pi$, we emphasize:

\begin{enumerate}
    \item[(i)] $I_\pi$ commutes with exterior differentiation, that is $I_\pi(d\omega) = d(I_\pi\omega)$ for any differential form $\omega \in \Omega^{k+r}(E)$.
    \item[(ii)] $I_\pi$ satisfies the following \emph{Fubini-type} identity: for any $\omega \in \Omega^{n+r}(E)$,
\[
\int_B I_\pi\omega = \int_E \omega.
\]
\end{enumerate}

For a proof of these properties, we refer to \cite{GHV}, Section 7.13, Proposition X, and Section 7.14, Theorem I. In particular, under the assumption of compactness and orientability of both the bundle $E \to B$ and the base $B$, we conclude that $I_\pi$ induces a morphism $H^{k+r}_{dR}(E) \to H^{k}_{dR}(B)$ between the De Rham cohomology groups which is an isomorphism for $k=\dim B$.

Now we estimate the comass of $I_\pi\omega$ under the assumption that $\pi$ is a Riemannian submersion:

\begin{lem} \label{lem:comass}
The comasses of $\omega$ and $I_\pi\omega$ are related by:
\[
\|I_\pi\omega(b)\| \leq \vol_{g_E}(E_b) \cdot \left(\sup_{e \in E_b} \|\omega(e)\|\right), \qquad \text{for all} \quad b \in B.
\]
\end{lem}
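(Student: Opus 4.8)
The plan is to bound the comass $\|I_\pi\omega(b)\|$ by testing the $k$-form $I_\pi\omega(b)$ against an arbitrary simple $k$-vector $\xi_1\wedge\ldots\wedge\xi_k\in\Lambda^kT_bB$ with $|\xi_1\wedge\ldots\wedge\xi_k|_{g_B}\le 1$, and then estimating the pairing directly through the fiber-integral formula \eqref{eq:retr}. By construction,
\[
\langle I_\pi\omega(b),\xi_1\wedge\ldots\wedge\xi_k\rangle=\int_{E_b}\omega^{b;\,\xi_1\wedge\ldots\wedge\xi_k},
\]
where $\omega^{b;\,\xi_1\wedge\ldots\wedge\xi_k}\in\Omega^{r}(E_b)$ is the top-degree form on the fiber from \eqref{eq:retr}, built from vectors $\bar\xi_i\in T_eE$ with $D\pi(e)\bar\xi_i=\xi_i$. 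Since $\pi$ is a Riemannian submersion and the definition of $\omega^{b;\,\xi_1\wedge\ldots\wedge\xi_k}$ is independent of the choice of the $\bar\xi_i$, I would take each $\bar\xi_i$ to be the \emph{horizontal lift} of $\xi_i$ at $e$. Then $D\pi(e)$ restricts to a linear isometry from the horizontal subspace $H_e\subset T_eE$ onto $T_bB$, so $\bar\xi_1\wedge\ldots\wedge\bar\xi_k$ is a simple multivector contained in $H_e$ with $|\bar\xi_1\wedge\ldots\wedge\bar\xi_k|_{g_E}=|\xi_1\wedge\ldots\wedge\xi_k|_{g_B}\le 1$.

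Next I would bound the integrand pointwise. Write $\omega^{b;\,\xi_1\wedge\ldots\wedge\xi_k}=f\,\mathrm{dvol}_{E_b}$ for a smooth function $f$ on $E_b$, where $\mathrm{dvol}_{E_b}$ is the Riemannian volume form of $(E_b,g_E|_{E_b})$ with the bundle orientation. Fix $e\in E_b$ and choose a positively oriented $g_E$-orthonormal frame $v_1,\ldots,v_r$ of the vertical space $(V_E)_e=T_e(E_b)$, so that $f(e)=\omega^{b;\,\xi_1\wedge\ldots\wedge\xi_k}(v_1,\ldots,v_r)=\omega_e(\bar\xi_1,\ldots,\bar\xi_k,v_1,\ldots,v_r)$. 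The decomposable $(k+r)$-vector $\bar\xi_1\wedge\ldots\wedge\bar\xi_k\wedge v_1\wedge\ldots\wedge v_r$ is simple, and because it is the wedge of a multivector lying in $H_e$ with one lying in the $g_E$-orthogonal complement $(V_E)_e$, its Gram matrix is block-diagonal and its norm factorizes as $|\bar\xi_1\wedge\ldots\wedge\bar\xi_k|_{g_E}\cdot|v_1\wedge\ldots\wedge v_r|_{g_E}\le 1\cdot1=1$. Hence, straight from the definition of the comass,
\[
|f(e)|=\bigl|\langle\omega(e),\bar\xi_1\wedge\ldots\wedge\bar\xi_k\wedge v_1\wedge\ldots\wedge v_r\rangle\bigr|\le\|\omega(e)\|\le\sup_{e'\in E_b}\|\omega(e')\|.
\]

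It then remains to integrate this pointwise bound over the fiber:
\[
\bigl|\langle I_\pi\omega(b),\xi_1\wedge\ldots\wedge\xi_k\rangle\bigr|=\Bigl|\int_{E_b}f\,\mathrm{dvol}_{E_b}\Bigr|\le\Bigl(\sup_{e'\in E_b}\|\omega(e')\|\Bigr)\vol_{g_E}(E_b),
\]
and taking the supremum over all simple $k$-vectors $\xi_1\wedge\ldots\wedge\xi_k$ of $g_B$-norm at most $1$ gives $\|I_\pi\omega(b)\|\le\vol_{g_E}(E_b)\cdot\sup_{e\in E_b}\|\omega(e)\|$. The argument is short, and its only substantive ingredients are two elementary facts: that the horizontal lift of a Riemannian submersion is a linear isometry onto the base tangent space (so it preserves norms of simple multivectors), and that the norm of a wedge of a horizontal with a vertical simple multivector equals the product of the two norms, thanks to $g_E$-orthogonality of the horizontal and vertical distributions. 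I therefore do not anticipate a serious obstacle; the only point requiring a little care is the bookkeeping of orientations, so that $\int_{E_b}\omega^{b;\,\xi_1\wedge\ldots\wedge\xi_k}$ is correctly identified with $\int_{E_b}f\,\mathrm{dvol}_{E_b}$.
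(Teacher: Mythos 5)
Your proof is correct and takes essentially the same route as the paper's: lift each $\xi_i$ horizontally, exploit the $g_E$-orthogonality of the horizontal and vertical distributions so that the norm of the combined simple $(k+r)$-vector factorizes, bound the coefficient function pointwise by $\|\omega(e)\|$ via the definition of comass, and integrate over the fiber. If anything, your write-up is slightly more careful on a point the paper glosses over: the paper takes "$g_B$-unit vectors" and asserts $|\bar\xi_1\wedge\ldots\wedge\bar\xi_k\wedge v_1\wedge\ldots\wedge v_r|=1$ (which requires orthonormality, not merely unit norm), whereas you work directly with an arbitrary simple $k$-vector of norm at most $1$ and record the block-diagonal Gram-matrix argument explicitly.
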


\begin{proof}
Let $\xi_1,\ldots, \xi_k \in T_bB$ be $g_B$-unit vectors. Since $\pi$ is a Riemannian submersion, for every $e \in E_b$, there are unique $\bar \xi_1,\ldots, \bar \xi_k \in (T_eE_b)^\perp$ such that $D\pi(e)\bar\xi_i=\xi_i$. Now choose a positive orthonormal (with respect to $g_E$) basis $\{v_j\}_{j=1}^r$ for $T_eE_b$, with duals forms $\{v_j^*\}$ in $T_e^*E_b$. Then
\[
v_1^* \wedge \ldots \wedge v_r^* = (\mathrm{d\,vol}_{E_b})_e,
\]
where $d\,\vol_{E_b}$ denotes the volume form of the fiber $E_b$ with respect to the metric induced by $g_E$ and the orientation induced by the bundle $\pi$. Using the notation from \eqref{eq:retr}, we get
\[
\omega^{b;\xi_1\wedge \ldots\wedge \xi_k} = \omega^{b;\xi_1\wedge \ldots\wedge \xi_k}(v_1,\ldots, v_r) v_1^*\wedge\ldots \wedge  v_r^* = \omega_e(\bar\xi_1,\ldots,\bar\xi_k,v_1,\ldots,v_r)(\mathrm{d\,vol}_{E_b})_e.
\]
On the other hand, using $|\bar\xi_1\wedge\ldots\wedge \bar\xi_k\wedge v_1\wedge \ldots\wedge v_r| = 1$, we see that $|\omega_e(\bar\xi_1,\ldots,\bar\xi_k,v_1,\ldots,v_r)|\leq \|\omega(e)\|$ and thus obtain the estimate
    \[
    |I_\pi\omega(b)(\xi_1,\ldots,\xi_k)|=\left|\int_{E_b}\omega^{b;\xi_1\wedge \ldots\wedge \xi_k}\right|\leq \int_{E_b}\left(\sup_{e \in E_b} \|\omega(e)\|\right)\mathrm{d\,vol}_{E_b},
    \]
which completes the proof.
\end{proof}

\section{Fiber integration and upper bounds for the volume spectrum}
The fiber integration map has a dual $I_\pi^*\colon \mathcal{D}_k(B) \to \mathcal{D}_{k+r}(E)$ between the spaces of currents. It is defined by duality as usual:
\[
\left\langle I_\pi^* T, \omega \right\rangle = \left\langle T, I_\pi \omega \right\rangle.
\]
Similar maps have appeared in the context of integral geometry as a generalization of the Cartesian product of currents \cite{simonbook}, most notably in the work of J. Brothers \cite{Brothers}, where it is called the \emph{lifting map}, and J.H. Fu \cite{Fu}, who defined a more general fiber product map with $G$-invariant currents in the fiber, for a $G$-bundle $\pi\colon E \to B$. 

In connection with these previous works, we observe:

\begin{lem} \label{lem:local}
The map
    \[
    I_\pi^* \colon \mathcal{D}_k(B) \cap \{\mathbf{M}(T) < \infty\} \to \mathcal{D}_{k+r}(E)
    \]
is a linear map such that:
\begin{enumerate}
    \item[(i)] If $E=B\times F$, then
        \[
        I_\pi^*T = T \times [\![ F]\!].
        \]
    \item[(ii)] if $\Phi\colon E \to \tilde E$ is a bundle map with $\varphi \circ \pi = \tilde \pi \circ \Phi$, where $\tilde \pi \colon \tilde E \to \tilde B$ is another fiber bundle, $\varphi \colon B \to \tilde B$ is smooth and $\Phi_b\colon E_b \to \tilde E_{\varphi(b)}$ is an orientation preserving diffeomorphism, then
        \[
        \Phi_{\#}I_\pi^*T = I_{\tilde \pi}(\varphi_{\#}T), \qquad \text{for all} \ T \in \mathcal{D}_k(B) \cap \{\mathbf{M}(T) < \infty\}.
        \]
\end{enumerate}
\end{lem}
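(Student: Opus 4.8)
The plan is to verify both assertions by reducing to the local product structure of the bundle and invoking the local expression \eqref{eq:product} for $I_\pi$. For part (i), when $E = B \times F$ is the trivial bundle, the claim $I_\pi^* T = T \times \llbracket F \rrbracket$ should be checked by testing against an arbitrary $(k+r)$-form $\omega$ on $E$. By linearity and the decomposition into types $\eta = a^{I,J}(x,y)\, dx^I \wedge dy^J$, it suffices to treat each type separately. For $|J| < r$ we showed $I_\pi \eta = 0$, and on the other hand the product current $T \times \llbracket F \rrbracket$ also annihilates such $\eta$ because the slant/product current sees only the top-degree-in-$F$ part; for $|J| = r$ we use \eqref{eq:product}, which gives $I_\pi \eta = \left( \int_F a(x,y)\, dy^J \right) dx^I$, and Fubini's theorem together with the definition of the product of currents (see \cite{simonbook}, or \cite{Federer}) shows $\langle T \times \llbracket F \rrbracket, \eta \rangle$ equals $\langle T, I_\pi \eta \rangle$. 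The finite-mass hypothesis on $T$ is what guarantees $T \times \llbracket F \rrbracket$ is well-defined as a current and that the Fubini interchange is legitimate.

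For part (ii), the naturality statement, I would again argue by duality. Fix $\omega \in \Omega^{k+r}(\tilde E)$ and compute $\langle \Phi_\# I_\pi^* T, \omega \rangle = \langle I_\pi^* T, \Phi^* \omega \rangle = \langle T, I_\pi(\Phi^* \omega) \rangle$, while on the other side $\langle I_{\tilde\pi}^*(\varphi_\# T), \omega \rangle = \langle \varphi_\# T, I_{\tilde\pi}\omega \rangle = \langle T, \varphi^*(I_{\tilde\pi}\omega) \rangle$. Thus everything reduces to the pointwise identity of differential forms on $B$:
\[
I_\pi(\Phi^*\omega) = \varphi^*(I_{\tilde\pi}\omega).
\]
To prove this, evaluate both sides on vectors $\xi_1, \ldots, \xi_k \in T_b B$. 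Using the description \eqref{eq:retr}, the left side is $\int_{E_b} (\Phi^*\omega)^{b; \xi_1 \wedge \cdots \wedge \xi_k}$, and the right side is $\int_{\tilde E_{\varphi(b)}} \omega^{\varphi(b); D\varphi(b)\xi_1 \wedge \cdots \wedge D\varphi(b)\xi_k}$. The relation $\varphi \circ \pi = \tilde\pi \circ \Phi$ ensures that if $\bar\xi_i \in T_e E$ projects to $\xi_i$ then $D\Phi(e)\bar\xi_i$ projects to $D\varphi(b)\xi_i$; and since $\Phi_b \colon E_b \to \tilde E_{\varphi(b)}$ is an orientation-preserving diffeomorphism, the pullback formula for the fiber integral — i.e.\ the change of variables $\int_{E_b} \Phi_b^* \alpha = \int_{\tilde E_{\varphi(b)}} \alpha$ — together with the identity $(\Phi^*\omega)^{b;\xi_1 \wedge \cdots \wedge \xi_k} = \Phi_b^*\big(\omega^{\varphi(b); D\varphi(b)\xi_1 \wedge \cdots \wedge D\varphi(b)\xi_k}\big)$ on $E_b$ gives the claim. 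This last identity on restricted forms is a direct unwinding of the definitions of $\Phi^*$ and of the form in \eqref{eq:retr}, using that $\Phi$ carries vertical vectors of $E$ to vertical vectors of $\tilde E$.

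The main obstacle I anticipate is bookkeeping rather than conceptual: carefully matching the orientation conventions in the fiber-integral definition with the orientation conventions for the product current and for $\Phi_b$, and making sure the finite-mass hypothesis is used precisely where the product current and the Fubini interchange require it. One should also be slightly careful that $I_\pi^* T$ a priori lives in $\mathcal{D}_{k+r}(E)$ only as a current (a linear functional), and that part (i) is really the statement that this functional coincides with the one induced by the rectifiable product current $T \times \llbracket F \rrbracket$; once (i) is established on trivializing charts, (ii) with $\Phi$ a bundle chart transition map is what globalizes the description, so logically (i) and the chart-wise version of (ii) together justify thinking of $I_\pi^* T$ as an honest current built by "integrating $T$ against the fiber fundamental class."
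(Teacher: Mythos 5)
Your argument is correct and matches the paper's proof: part~(i) is established by testing against type-decomposed forms $\eta = a^{I,J}dx^I\wedge dy^J$, using \eqref{eq:product} and the bidegree decomposition implicit in the Cartesian product of currents, and part~(ii) is reduced by duality to the form-level identity $I_\pi(\Phi^*\omega)=\varphi^*(I_{\tilde\pi}\omega)$. The only difference is that you supply a direct proof of that identity (via \eqref{eq:retr}, the relation $\varphi\circ\pi=\tilde\pi\circ\Phi$, and change of variables along the fiber diffeomorphisms $\Phi_b$), whereas the paper simply cites \cite[Proposition~VIII]{GHV}.
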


\begin{proof}
Property (i) follows directly from the local expression for $I_\pi\eta$ derived in \eqref{eq:product} (see also \cite[Sections 7.12]{GHV}) and the definition of the Cartesian product, see \cite[Definition 26.16]{simonbook}. For (ii), it suffices to note that 
    \[
        I_\pi(\Phi^*\beta) = \varphi^*(I_{\tilde \pi}\beta),
    \]
for any $\beta \in \Omega^{k+r}(\tilde E)$ (see \cite[Proposition VIII]{GHV} for a proof).
\end{proof}

Intuitively, the first property above means that $I_\pi^*$ can be described locally as taking the product with the fiber. As a corollary (see also \cite{Brothers}), we get:

\begin{coro}
The map $I_\pi^*$ preserves the spaces of rectifiable and integral currents and chains. Concretely, there are well-defined maps:
\[
I_\pi^* \colon \mathcal{R}_k(B) \to \mathcal{R}_{k+r}(E),
\]
\[
I_\pi^* \colon \mathbf{I}_k(B) \to \mathbf{I}_{k+r}(E),
\]
and
\[
I_\pi^* \colon \mathcal{Z}_k(B) \to \mathcal{Z}_{k+r}(E).
\]
\end{coro}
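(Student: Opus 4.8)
The plan is to reduce everything to the local picture provided by part (i) of Lemma~\ref{lem:local}. Fix a locally finite trivializing atlas $\{U_\alpha\}$ for $\pi\colon E \to B$, with bundle charts $\Phi_\alpha \colon \pi^{-1}(U_\alpha) \to U_\alpha \times F$ (we may choose the $U_\alpha$ to be, say, geodesically convex open sets). The key observation is that fiber integration is natural with respect to restriction: for an open set $U \subset B$, writing $\pi_U \colon \pi^{-1}(U) \to U$ for the restricted bundle and $\iota_U\colon U \hookrightarrow B$, $\tilde\iota_U\colon \pi^{-1}(U)\hookrightarrow E$ for the inclusions, one has $I_{\pi_U}(\tilde\iota_U^*\omega) = \iota_U^*(I_\pi\omega)$, and dually $(I_\pi^*T)\restriction \pi^{-1}(U) = I_{\pi_U}^*(T \restriction U)$ for any $T \in \mathcal{D}_k(B)$ with $\mathbf{M}(T)<\infty$. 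Combining this with part (ii) of Lemma~\ref{lem:local} applied to the trivialization $\Phi_\alpha$, we get that in the chart $U_\alpha$,
\[
(\Phi_\alpha)_{\#}\big((I_\pi^*T)\restriction \pi^{-1}(U_\alpha)\big) = (T\restriction U_\alpha) \times [\![F]\!].
\]

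First I would treat the case of a rectifiable current $T \in \mathcal{R}_k(B)$. By the structure theorem for rectifiable currents, $T = \tau(S,\theta,\zeta)$ for a $k$-rectifiable set $S$, an integer-valued locally $\hc^k$-summable multiplicity $\theta$, and a unit $k$-vector orientation $\zeta$ defined $\hc^k$-a.e.\ on $S$. Since $\mathbf{M}(T)<\infty$, the Cartesian product $T\times[\![F]\!]$ is a well-defined rectifiable current in $\R^L\times\R^{L'}$ of dimension $k+r$, carried by the rectifiable set $S\times F$ with multiplicity $\theta\circ\mathrm{pr}$ — this is standard, see \cite[\S 26]{simonbook}. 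By the displayed local identity, $I_\pi^*T$ agrees in each $\pi^{-1}(U_\alpha)$ with the pushforward under the diffeomorphism $\Phi_\alpha^{-1}$ of such a rectifiable current; since being rectifiable is a local property preserved by diffeomorphisms, and the $\pi^{-1}(U_\alpha)$ cover $E$, we conclude $I_\pi^*T \in \mathcal{R}_{k+r}(E)$, with the mass estimate $\mathbf{M}(I_\pi^*T) \le \big(\sup_b \vol_{g_E}(E_b)\big)\mathbf{M}(T)$ following either from the comass bound in Lemma~\ref{lem:comass} by duality, or directly from the local product structure. For the second statement, suppose $T \in \mathbf{I}_k(B)$, so $\partial T \in \mathcal{R}_{k-1}(B)$. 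Because $I_\pi^*$ commutes with the boundary operator — this is dual to property (i) of the fiber integration map, $I_\pi(d\omega)=d(I_\pi\omega)$, via $\langle \partial I_\pi^*T,\omega\rangle = \langle I_\pi^*T, d\omega\rangle = \langle T, I_\pi d\omega\rangle = \langle T, dI_\pi\omega\rangle = \langle \partial T, I_\pi\omega\rangle = \langle I_\pi^*(\partial T),\omega\rangle$ — we get $\partial(I_\pi^*T) = I_\pi^*(\partial T)$, which is rectifiable by the first part (note $\mathbf{M}(\partial T)<\infty$ since $\partial T \in \mathcal{R}_{k-1}$ is compactly supported). Hence $I_\pi^*T \in \mathbf{I}_{k+r}(E)$. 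Finally, if $T \in \mathcal{Z}_k(B)$ then $\partial T = 0$, so $\partial(I_\pi^*T) = I_\pi^*(0) = 0$, giving $I_\pi^*T \in \mathcal{Z}_{k+r}(E)$.

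The main obstacle I anticipate is purely bookkeeping rather than conceptual: one must be careful that the local identifications glue correctly, i.e.\ that the orientation conventions in the Cartesian product $T\restriction U_\alpha \times [\![F]\!]$ are compatible across overlaps $U_\alpha \cap U_\beta$ — this is exactly where the hypothesis in Lemma~\ref{lem:local}(ii) that the transition maps $\Phi_\alpha \circ \Phi_\beta^{-1}$ restrict to \emph{orientation-preserving} diffeomorphisms of the fibers enters, and it is the reason we need $F$ (equivalently the vertical bundle) to be oriented for the pushforwards to patch into a single globally defined current. A secondary technical point is verifying that $T \times [\![F]\!]$ is genuinely integral (not merely rectifiable) when $T$ is, which again reduces to the classical fact that the Cartesian product of integral currents is integral, together with the boundary formula $\partial(T\times[\![F]\!]) = (\partial T)\times[\![F]\!]$ since $F$ is closed. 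One could alternatively bypass the atlas argument entirely by invoking the closure/compactness theorem: $I_\pi^*T$ has finite mass and finite-mass boundary by Lemma~\ref{lem:comass}, and its restriction to each chart is integral, so it is integral globally; but the local gluing argument above is cleaner and self-contained given what has already been established.
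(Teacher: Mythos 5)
Your argument is correct, and it reaches the same conclusion as the paper by the same underlying idea (reduce to the local product structure given by Lemma~\ref{lem:local}), but it differs in how the rectifiability step is actually justified. The paper's proof is essentially a citation: it invokes Theorem~3.3 and Corollary~3.5 of Brothers' paper on the lifting map, together with the Federer--Fleming characterization of rectifiable currents from \cite[4.1.28]{Federer}, and leaves the local verification implicit. You instead spell out the argument directly, working with the representation $T = \tau(S,\theta,\zeta)$, the Cartesian product $T\times[\![F]\!]$ from \cite[\S 26]{simonbook}, pushforward under the trivialization $\Phi_\alpha^{-1}$, and the locality of rectifiability. This is a more self-contained and elementary route, and the payoff is that the reader sees exactly where the integer densities and the rectifiable carrier $S\times F$ come from, at the cost of some bookkeeping over the atlas. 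The treatment of $\mathbf{I}_k$ and $\mathcal{Z}_k$ via the identity $\partial I_\pi^* = I_\pi^*\partial$ matches the paper's argument line for line.

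One small clarification: your ``gluing'' concern is somewhat overstated. The current $I_\pi^*T$ is globally defined from the outset by duality with $I_\pi$; it is not assembled chart by chart, so there is no global object to patch together. What the orientation-preserving hypothesis in Lemma~\ref{lem:local}(ii) actually buys you is that the local trivializations indeed produce the Cartesian product $(T\restriction U_\alpha)\times[\![F]\!]$ rather than a current with the opposite fiber orientation, which is what makes the local rectifiability computation valid. Since rectifiability and integrality are local properties of an already-defined current, the local verification in each $\pi^{-1}(U_\alpha)$ suffices on its own; no compatibility on overlaps need be checked. Also, your remark that one could alternatively ``invoke the closure theorem'' is not genuinely an alternative: finite mass and finite boundary mass give normality, not integrality, and you still need the local integrality observation to conclude -- so that paragraph reduces to the same locality argument you already gave.
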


\begin{proof}
The fact that $I_\pi$ preserves rectifiable currents (with integer densities) follows directly from the previous Lemma together with Theorem 3.3 and Corollary 3.5 from \cite{Brothers}, relying on Federer-Fleming's characterization of rectifiable currents, see \cite[4.1.28]{Federer}. Since $I_\pi^*(\partial T) = \partial(I_\pi^*T)$, we readily see that $T \in \mathbf{I}_k(B)$ implies $I_\pi^*T \in \mathbf{I}_{k+r}(E)$, and $T \in \mathcal{Z}_k(B)$ implies $I_\pi^*T\in \mathbf{I}_k(B)$.
\end{proof}

We also observe:

\begin{lem}
The currents $[\![E]\!]$ and $[\![B]\!]$ induced by integration over the total space and over the base, respectively, are related by $I_\pi^*[\![B]\!]=[\![E]\!]$.
\end{lem}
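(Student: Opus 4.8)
The plan is to recognize this identity as essentially a restatement of the Fubini-type property (ii) of the fiber integration map, once the definition of the dual map $I_\pi^*$ is unwound. Throughout, I would fix the convention that the orientation of $E$ is the one induced by the orientation of $B$ together with the fiber orientation used to define $I_\pi$ — this is precisely the convention under which property (ii) is valid, and stating it at the outset removes any ambiguity in the meaning of $[\![E]\!]$.

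First I would note that both $I_\pi^*[\![B]\!]$ and $[\![E]\!]$ are $(n+r)$-dimensional currents on $E$, where $n=\dim B$ and $r=\dim E_b$, so that $n+r=\dim E$ is the top dimension; indeed $[\![B]\!]\in \mathcal{D}_n(B)$ and $I_\pi^*$ raises dimension by $r$. Since, by definition, an $(n+r)$-current on $E$ is a continuous linear functional on $\Omega^{n+r}(E)$, to establish the equality it suffices to check that the two currents agree on every top-degree form $\omega\in\Omega^{n+r}(E)$.

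Next I would carry out the computation: by the duality defining $I_\pi^*$, and then by the definition of $[\![B]\!]$ as integration over $B$ (noting $I_\pi\omega\in\Omega^n(B)$ is of top degree),
\[
\left\langle I_\pi^*[\![B]\!],\omega\right\rangle = \left\langle [\![B]\!], I_\pi\omega\right\rangle = \int_B I_\pi\omega.
\]
Applying the Fubini identity from property (ii) of the fiber integration map, $\int_B I_\pi\omega = \int_E\omega$, which is exactly $\langle [\![E]\!],\omega\rangle$. Hence $\langle I_\pi^*[\![B]\!],\omega\rangle = \langle [\![E]\!],\omega\rangle$ for all $\omega\in\Omega^{n+r}(E)$, giving $I_\pi^*[\![B]\!]=[\![E]\!]$.

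There is no real obstacle here; the only point requiring attention is the orientation bookkeeping mentioned above, and the observation that a top-dimensional current is determined by its action on top-degree forms (which is immediate from the definition of current). One could alternatively deduce the statement from the fact, recorded in the excerpt, that $I_\pi$ induces an isomorphism $H^{n+r}_{dR}(E)\to H^{n}_{dR}(B)$ matching the cohomology classes dual to the fundamental classes, but the direct computation above is shorter and sidesteps any appeal to Poincaré duality.
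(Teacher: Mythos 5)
Your proof is correct and is essentially the same as the paper's: unwind the duality defining $I_\pi^*$ and apply the Fubini-type property (ii) of $I_\pi$ to get $\langle I_\pi^*[\![B]\!],\omega\rangle = \int_B I_\pi\omega = \int_E\omega = \langle[\![E]\!],\omega\rangle$. The additional remarks on orientation conventions and on top-degree currents being determined by their action on top-degree forms are sensible bookkeeping but not a different route.
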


\begin{proof}
This follows readily from the Fubini-type property for $I_\pi$:
\[
    \left\langle I_\pi^*[\![B]\!], \omega\right\rangle = \left\langle  [\![B]\!] , I_\pi \omega \right\rangle  = \int_B I_\pi\omega = \int_E\omega = \left\langle[\![E]\!], \omega \right\rangle
.\qedhere\]\end{proof}

We can leverage the comass bounds for $I_\pi\omega$ derived in the previous section to obtain upper bounds for the mass of $I_\pi^*T$:

\begin{lem} \label{lem:mass}
Let $T \in \mathcal{D}_k(B)$. Then
    \[
    \mathbf{M}(I_\pi^*T) \leq \mathbf{M}(T) \cdot \left( \sup_{b \in B} \vol_{g_E}(E_b) \right).
    \]
\end{lem}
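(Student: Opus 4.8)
The plan is to estimate the mass of $I_\pi^*T$ directly from its definition as a functional on forms, using the comass bound from Lemma~\ref{lem:comass}. Recall that
\[
\mathbf{M}(I_\pi^*T) = \sup\{\langle I_\pi^*T,\omega\rangle \colon \omega \in \Omega^{k+r}(E),\ \|\omega\|\leq 1\},
\]
and by definition $\langle I_\pi^*T,\omega\rangle = \langle T, I_\pi\omega\rangle$. So the first step is: given a test form $\omega \in \Omega^{k+r}(E)$ with pointwise comass $\|\omega(e)\| \leq 1$ for all $e\in E$, I would like to bound $|\langle T, I_\pi\omega\rangle|$ in terms of $\mathbf{M}(T)$ and the comass of $I_\pi\omega$.

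Second, I would apply Lemma~\ref{lem:comass} to the form $\omega$: since $\sup_{e\in E_b}\|\omega(e)\| \leq 1$ for every $b$, the lemma gives
\[
\|I_\pi\omega(b)\| \leq \vol_{g_E}(E_b) \leq \sup_{b'\in B}\vol_{g_E}(E_{b'}) =: C,
\]
for all $b\in B$. Hence $C^{-1} I_\pi\omega$ is a smooth $k$-form on $B$ with pointwise comass at most $1$. Third, by the definition of mass as a supremum over forms of comass $\leq 1$ (applied to the current $T$, using $\mathbf{M}(T)<\infty$ so this is the genuine dual characterization),
\[
|\langle T, I_\pi\omega\rangle| = C\,\bigl|\langle T, C^{-1}I_\pi\omega\rangle\bigr| \leq C\,\mathbf{M}(T).
\]
Taking the supremum over all admissible $\omega$ yields $\mathbf{M}(I_\pi^*T)\leq C\,\mathbf{M}(T)$, which is exactly the claimed inequality. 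Note this also shows $I_\pi^*$ maps finite-mass currents to finite-mass currents, consistent with Lemma~\ref{lem:local}.

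I do not expect a serious obstacle here; the statement is essentially a repackaging of Lemma~\ref{lem:comass} under duality. The one point requiring a little care is the implicit assumption $\mathbf{M}(T)<\infty$ (otherwise the inequality is vacuous, and in any case $I_\pi^*T$ may not even be defined on all of $\mathcal{D}_{k+r}(E)$ as a mass-finite current) — the bound on $\|I_\pi\omega(b)\|$ guarantees $I_\pi\omega$ is a legitimate test form for $T$, so that pairing $\langle T, I_\pi\omega\rangle$ is controlled by $\mathbf{M}(T)$ in the standard way. A secondary technical remark is that comass, rather than the Euclidean norm, is the correct quantity throughout: this is why Lemma~\ref{lem:comass} was phrased in terms of comass, and it matches exactly with the definition of $\mathbf{M}$ via forms of comass $\leq 1$, so no conversion constants appear.
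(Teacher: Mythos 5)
Your proof is correct and follows essentially the same approach as the paper: both estimate $|\langle I_\pi^*T,\omega\rangle| = |\langle T, I_\pi\omega\rangle|$ using the duality definition of mass and the comass bound from Lemma~\ref{lem:comass}, then take the supremum over test forms $\omega$ of comass at most $1$.
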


\begin{proof}
Let $\omega \in \Omega^{k+r}(E)$. Then
\begin{align*}
    |I_\pi^*T(\omega)| = |T(I_\pi \omega)| &\leq  \mathbf{M}(T) \cdot \sup_{b \in B}\|I_\pi\omega(b)\| \\
    & \leq \mathbf{M}(T)\cdot \sup_{b \in B} \left(\vol_{g_E}(E_b) \cdot \left(\sup_{e \in E_b} \|\omega(e)\|\right)\right)\\
    & \leq \mathbf{M}(T)\cdot \left( \sup_{b \in B} \vol_{g_E}(E_b)\right)\cdot \sup_{e \in E} \|\omega(e)\|,
\end{align*}
where we used Lemma \ref{lem:comass} to obtain the second inequality. Therefore
    \[
    \mathbf{M}(I_\pi^*T) = \sup\left\{ I_\pi^* T(\omega) \colon \omega \in \Omega^{k+r}(E), \sup_{e \in E}\|\omega(e)\| \leq 1 \right\} \leq \mathbf{M}(T)\cdot \left( \sup_{b \in B} \vol_{g_E}(E_b)\right). \qedhere
    \]
\end{proof}

\begin{lem} \label{lem:continuous}
The map
\[
I_\pi^* \colon \mathbf{I}_k(B) \to \mathbf{I}_{k+r}(E)
\]
is Lipschitz with respect to the topologies induced by $\mathcal{F}$, $\mathbf{M}$, and $\mathbf{F}$, with the same Lipschitz constant, depending only on the volume of the fibers of $\pi \colon E \to B$.
\end{lem}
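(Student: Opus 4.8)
The plan is to treat the three metrics one at a time, using only that $I_\pi^*$ is linear (Lemma~\ref{lem:local}), that it commutes with the boundary operator and sends integral currents to integral currents (the corollary above), and the comass/mass bound of Lemma~\ref{lem:mass}. Throughout, write $C:=\sup_{b\in B}\vol_{g_E}(E_b)$.

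The mass estimate is immediate: for $S,T\in\mathbf{I}_k(B)$, linearity and Lemma~\ref{lem:mass} give $\mathbf{M}(I_\pi^*S-I_\pi^*T)=\mathbf{M}(I_\pi^*(S-T))\le C\,\mathbf{M}(S-T)$. For the flat metric I would take an arbitrary competitor $S-T=P+\partial Q$ with $P\in\mathbf{I}_k(B)$, $Q\in\mathbf{I}_{k+1}(B)$, and push it forward: since $I_\pi^*$ commutes with $\partial$ and preserves integral currents, $I_\pi^*(S-T)=I_\pi^*P+\partial(I_\pi^*Q)$ is an admissible decomposition in $E$, so $\mathcal{F}(I_\pi^*(S-T))\le\mathbf{M}(I_\pi^*P)+\mathbf{M}(I_\pi^*Q)\le C(\mathbf{M}(P)+\mathbf{M}(Q))$; taking the infimum over all such $P,Q$ yields $\mathcal{F}(I_\pi^*S-I_\pi^*T)\le C\,\mathcal{F}(S-T)$.

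The substantive part is the $\mathbf{F}$-metric, where $\mathbf{F}(I_\pi^*S,I_\pi^*T)=\mathcal{F}(I_\pi^*(S-T))+\mathbf{F}(|I_\pi^*S|,|I_\pi^*T|)$: the flat summand is already controlled, so the task is to bound the varifold summand. The key observation is that the varifold $|I_\pi^*S|$ depends only on $|S|$, through a fixed ``fiberwise lifting'' operation. Indeed, by the local product description of $I_\pi^*$ (Lemma~\ref{lem:local}) and the fact that, $\pi$ being a Riemannian submersion, the horizontal lift $T_bB\to T_eE$ is a linear isometry onto the horizontal space for every $e\in E_b$, the current $I_\pi^*S$ is carried by $\pi^{-1}(\mathrm{spt}\,|S|)$ with density $\theta_S\circ\pi$ and tangent plane at $e$ equal to the orthogonal direct sum of the horizontal lift of $T_{\pi(e)}(\mathrm{spt}\,|S|)$ and the vertical space $T_eE_{\pi(e)}$. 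Applying the coarea formula for $\pi$ then gives, for every test function $g\in C_c(G_{k+r})$,
\[
(I_\pi^*S)(g)=|S|(h_g),\qquad h_g(b,\Pi):=\int_{E_b}g\big(e,\ \widetilde\Pi_e\oplus T_eE_b\big)\,d\vol_{E_b}(e),
\]
and likewise for $T$, so that $(I_\pi^*S)(g)-(I_\pi^*T)(g)=(|S|-|T|)(h_g)$. One then checks that $|h_g|\le\vol_{g_E}(E_b)\le C$ and that $h_g$ is Lipschitz on $G_k(B)$, after which $h_g$ (rescaled) lies in the test class and $\mathbf{F}(|I_\pi^*S|,|I_\pi^*T|)\le \mathrm{const}\cdot\mathbf{F}(|S|,|T|)$, completing the argument.

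I expect the main obstacle to be precisely this Lipschitz estimate for the transform $g\mapsto h_g$. The dependence of $h_g$ on the plane $\Pi$ is $C$-Lipschitz for free, because the horizontal lift is a fiberwise linear isometry; the dependence on the base point $b$, however, also sees how the fibers and the horizontal distribution vary along $B$ — a priori the second fundamental form of the fibers and the curvature of the Ehresmann connection — so here one must invoke compactness of $E$ (finitely many local trivializations with uniformly bounded transition data) to get a finite Lipschitz constant. This is why the clean bound with constant exactly $\sup_{b}\vol_{g_E}(E_b)$ is transparent for the $\mathbf{M}$- and $\mathcal{F}$-metrics, while for the $\mathbf{F}$-metric the verification that the constant is still governed by the fiber volumes (and not by extraneous curvature quantities) is the delicate point; in any case the resulting $\mathbf{F}$-continuity with no concentration of mass is what the min-max applications require.
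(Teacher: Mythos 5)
Your mass and flat estimates coincide with the paper's: linearity, the mass bound of Lemma~\ref{lem:mass}, and the compatibility of $I_\pi^*$ with $\partial$ and with integrality give the $\mathbf{M}$- and $\mathcal{F}$-Lipschitz bounds with constant $C=\sup_{b\in B}\vol_{g_E}(E_b)$.

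Where you diverge is the $\mathbf{F}$-metric, and here the paper takes a one-line shortcut that sidesteps the entire issue you flag. It simply invokes Pitts's inequality $\mathbf{F}(|S|,|T|)\le\mathbf{M}(S-T)$ (Pitts, p.~66, already quoted in the preliminaries), so that $\mathbf{F}(|I_\pi^*S|,|I_\pi^*T|)\le\mathbf{M}(I_\pi^*(S-T))\le C\,\mathbf{M}(S-T)$, and combines this with the $\mathcal{F}$-estimate. In effect, the paper controls the varifold $\mathbf{F}$-distance of the lifts by the \emph{mass} of $S-T$, not by $\mathbf{F}(|S|,|T|)$, and therefore never needs to understand how the lifting transforms Grassmannian test functions. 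That weaker form of continuity is exactly what Proposition~\ref{fiber integrated sweepout} requires (sweepouts in $\mathcal{P}_p$ are $\mathcal{F}$-continuous with no concentration of mass, and the no-concentration estimate is handled separately there), and it delivers the clean constant $C$.

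Your coarea-based identity $|I_\pi^*S|(g)=|S|(h_g)$ is correct and would in principle give genuine $\mathbf{F}$-to-$\mathbf{F}$ Lipschitz continuity of the varifold summand, which is strictly stronger. But, as you yourself observe, the Lipschitz constant of $b\mapsto h_g(b,\cdot)$ picks up the second fundamental form of the fibers and the curvature of the horizontal distribution, so the resulting constant cannot be $\sup_b\vol_{g_E}(E_b)$ alone, which is incompatible with the ``same Lipschitz constant'' in the statement. Replace your elaborate third step with the $\mathbf{F}(|S|,|T|)\le\mathbf{M}(S-T)$ reduction; the rest of your argument stands.
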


\begin{proof}
The Lipschitz property for the mass norm is a consequence of the linearity of $I_\pi^*$ and Lemma \ref{lem:mass}. Using again the linearity of $I_\pi^*$ and the fact that $I_\pi^*$ preserves the space of integer multiplicity rectifiable currents, we see that $I_\pi^*$ is Lipschitz with respect to the flat metric, with the same Lipschitz constant. Finally, since for any two $k-$ currents $T,S \in \mathcal{R}_k(M)$ we have (see \cite[p. 66]{Pitts})
    \[\mathbf{F}(|S|,|T|) \leq \mathbf{M}(S-T),\]
the Lipschitz property for the $\mathbf{F}$-metric follows directly.
\end{proof}

Finally, we show similar mass bounds and Lipschitz continuity for modulo $2$ currents and their respective norms.

\begin{prop} \label{mod 2 fiber integration}
The map $I_\pi^*$ preserves congruence classes of rectifiable modulo $2$ currents. In particular, there are well-defined maps
    \[
    I_\pi^*\colon \mathbf{I}_k(B;\Z_2) \to \mathbf{I}_{k+r}(E;\Z_2),
    \]
and
    \[
    I_\pi^*\colon \mathcal{Z}_k(B;\Z_2) \to \mathcal{Z}_{k+r}(E;\Z_2).
    \]
Furthermore, we have the modulo 2 mass bounds:
    \begin{equation} \label{mass_bound}
        \mathbf{M}^2(I_\pi^*T) \leq \left( \sup_{b \in B} \vol_{g_E}(E_b) \right)\, \mathbf{M}^2(T),
    \end{equation}
for any $T \in \mathcal{R}_k(B;\Z_2)$, and the maps above are Lipschitz continuous with respect to the topologies induced by $\mathcal{F}^2$, $\mathbf{M}^2$ and $\mathbf{F}^2$, with the same Lipschitz constant from Lemma \ref{lem:continuous}.
\end{prop}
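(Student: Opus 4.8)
The plan is to reduce everything to the integer-coefficient statements already established in Lemmas \ref{lem:mass} and \ref{lem:continuous}, using the standard fact that the modulo $2$ norms are obtained from the integer norms by an infimum over integral representatives in the same congruence class. First I would check that $I_\pi^*$ descends to congruence classes: if $T_1, T_2 \in \mathcal{R}_k(B)$ satisfy $T_1 - T_2 = 2Q$ for some $Q \in \mathcal{R}_k(B)$, then by linearity $I_\pi^*T_1 - I_\pi^*T_2 = 2\,I_\pi^*Q$, and $I_\pi^*Q$ is rectifiable by the Corollary preceding Lemma \ref{lem:mass}; hence $(I_\pi^* T_1)^2 = (I_\pi^* T_2)^2$. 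Since $I_\pi^*$ commutes with $\partial$, the same argument shows it maps $\mathbf{I}_k(B;\Z_2)$ into $\mathbf{I}_{k+r}(E;\Z_2)$ and $\mathcal{Z}_k(B;\Z_2)$ into $\mathcal{Z}_{k+r}(E;\Z_2)$.

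For the mass bound \eqref{mass_bound}, let $c = \sup_{b\in B}\vol_{g_E}(E_b)$ and fix $T \in \mathcal{R}_k(B;\Z_2)$ with $\mathbf{M}^2(T) < \infty$ (the bound is trivial otherwise). Given $\e > 0$, choose by definition \eqref{mod v mass} a representative $R \in \mathcal{R}_k(B)$ with $\mathcal{F}^2(T - R) \le \e$ and $\mathbf{M}(R) \le \mathbf{M}^2(T) + \e$. Applying $I_\pi^*$, Lemma \ref{lem:mass} gives $\mathbf{M}(I_\pi^*R) \le c\,\mathbf{M}(R) \le c(\mathbf{M}^2(T) + \e)$, while the flat-norm Lipschitz property from Lemma \ref{lem:continuous} — which passes to the modulo $2$ flat norm $\mathcal{F}^2$ since $I_\pi^*$ respects congruence classes and the $\mathcal{F}^2$-norm is the infimum of $\mathcal{F}$ over representatives — yields $\mathcal{F}^2(I_\pi^*T - I_\pi^*R) \le c\,\e$. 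Since $I_\pi^*R$ is an integral rectifiable representative, taking $\e \downarrow 0$ in the definition of $\mathbf{M}^2$ gives $\mathbf{M}^2(I_\pi^*T) \le c\,\mathbf{M}^2(T)$. The Lipschitz continuity with respect to $\mathbf{M}^2$ then follows immediately from \eqref{mass_bound} applied to $T - S$ (using additivity of $I_\pi^*$ on congruence classes), and the Lipschitz continuity with respect to $\mathcal{F}^2$ was already observed above; for the $\mathbf{F}^2$-metric one combines the $\mathcal{F}^2$ and $\mathbf{M}^2$ bounds exactly as in the last line of the proof of Lemma \ref{lem:continuous}, using the analogue of the inequality $\mathbf{F}(|S|,|T|)\le \mathbf{M}(S-T)$ for modulo $2$ currents.

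I do not expect a serious obstacle here; the only points requiring care are bookkeeping ones: verifying that $I_\pi^*$ of a rectifiable current is rectifiable in the modulo $2$ setting (which is already packaged in the Corollary above, since the integer statement specializes), and checking that the infimum-over-representatives definitions of $\mathcal{F}^2$, $\mathbf{M}^2$, $\mathbf{F}^2$ interact correctly with a linear map that is Lipschitz for the integer norms and respects the congruence relation $T \sim T + 2Q$. The mildly delicate step is the mass bound, where one must be sure that the approximating integral current $R$ chosen for $T$ can be pushed forward to an integral current $I_\pi^* R$ that simultaneously controls both $\mathbf{M}(I_\pi^* R)$ and $\mathcal{F}^2(I_\pi^* T - I_\pi^* R)$ — but this is exactly what Lemmas \ref{lem:mass} and \ref{lem:continuous} provide in tandem.
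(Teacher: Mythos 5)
Your proposal matches the paper's proof in all essential respects: you descend to congruence classes via linearity of $I_\pi^*$, use commutativity with $\partial$ to preserve $\mathbf{I}_k(\cdot;\Z_2)$ and $\mathcal{Z}_k(\cdot;\Z_2)$, establish the $\mathcal{F}^2$-Lipschitz bound by decomposing into representatives, and then prove the $\mathbf{M}^2$-bound by pushing forward an approximating integral representative and invoking Lemmas \ref{lem:mass} and \ref{lem:continuous}. The only divergence is cosmetic bookkeeping in the mass-bound step — the paper introduces an auxiliary $\delta$ and rescales $\e$ by $1/C$, whereas you rescale $\e$ directly — but both arguments are correct and logically identical.
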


\begin{proof}
If $T_1,T_2 \in \mathcal{R}_k(B)$ are congruent modulo $2$ then, by the linearity of $I_\pi^*$, so are $I_\pi^*T_1$ and $I_\pi^*T_2$. This shows that $I_\pi^*$ induces a map from $\mathcal{R}_k(B;\Z_2)$ to $\mathcal{R}_{k+r}(E;\Z_2)$. For a congruence class $\tau \in \mathbf{I}_{k}(B;\Z_2)$, namely such that $\tau \in \mathcal{R}_{k}(B;\Z_2)$ and $\partial \tau \in \mathcal{R}_{k-1}(B;\Z_2)$, we have $I_\pi^*\tau  \in \mathcal{R}_{k+r}(E;\Z_2)$ and, by the commutativity of $\partial $ and $I_\pi ^*$,
\[
\partial(I_\pi^*\tau) = I_{\pi}^*(\partial \tau) \in \mathcal{R}_{k-1+r}(E;\Z_2).
\]
Therefore $I_\pi^*\tau \in \mathbf{I}_{k+r}(E;\Z_2)$. Since $\partial \tau =0$ implies $\partial(I_\pi^*\tau) = I_\pi^*(\partial \tau)=0$, this finishes the first part of the proof.

Now given $T \in \mathcal{R}_k(B)$, if $T=R+\partial S + 2Q$ for rectifiable $R$ and $S$ and a flat chain $Q$, then 
\[
I_\pi^*T = I_\pi^*R + \partial (I_\pi^*S) + 2 (I_\pi^*Q), \qquad \text{hence} \qquad \mathcal{F}^2(I_ \pi^*T) \leq \mathbf{M}(I_\pi^*R) + \mathbf{M}(I_\pi^*S)
.\]
By using Lemma \ref{lem:mass} and taking the infimum over $R$, $S$ and $Q$ and, we conclude
\[
\mathcal{F}^2(I_ \pi^*T) \leq C\cdot \mathcal{F}^2(T), \qquad \text{where} \quad C=\left( \sup_{b \in B} \vol_{g_E}(E_b) \right).
\]
Therefore, the map induced by $I_\pi^*$ in modulo 2 chains is Lipschitz, with the same Lipschitz constant as in Lemma \ref{lem:mass}. 

Now we show the mass bound:
\[
\mathbf{M}^2(I_\pi^*T) \leq C\, \mathbf{M}^2(T) \quad \text{for} \ T \in \mathcal{R}_k(B),
\]
and $C$ as above. Let $\delta>0$ be arbitrary, it follows from the Definition \ref{mod v mass} of $\mathbf{M}^2(T)$ that for any $\epsilon>0$, there exists $R \in \mathcal{R}_k(B)$ with
\[
\mathcal{F}^2(T-R) < \frac{\epsilon}{C} \qquad \text{and} \qquad \mathbf{M}(R) \leq \left( \mathbf{M}^2(T)+\frac{\delta}{C}\right)+\frac{\epsilon}{C},
\]
where we choose $t = \mathbf{M}^2(T)+\frac{\delta}{C}$. Then $I_\pi^*R \in \mathcal{R}_{k+r}(E)$ satisfies
\[
\mathcal{F}^2(I_\pi^*T - I_\pi^*R) =\mathcal{F}^2(I_\pi^*(T-R)) \leq C\mathcal{F}^2(T-R)< \epsilon,
\]
and
\[
\mathbf{M}(I_\pi^*R) \leq C\mathbf{M}(R) \leq C\mathbf{M}^2(T) + \delta + \epsilon.
\]
By the definition of the modulo 2 mass, see \eqref{mod v mass}, this shows that $\mathbf{M}^2(I_\pi^*T) \leq C\mathbf{M}^2(T) + \delta$. Since $\delta>0$ is arbitrary, this concludes the proof of \eqref{mass_bound}. The Lipschitz properties for the maps induced by $I_\pi^*$ in modulo 2 currents then follow exactly as in the proof of Lemma \ref{lem:continuous}.
\end{proof}\smallskip

\begin{prop} \label{fiber integrated sweepout}
Let $\Phi \colon X \to \mathcal{Z}_{n -1}(B;\Z_2)$ be a $p$-sweepout of $B$. Then
\[I_\pi^*\circ \Phi \colon X \to \mathcal{Z}_{n+r-1}(E;\Z_2)\]
is a $p$-sweepout of $E$.  
\end{prop}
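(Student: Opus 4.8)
The plan is to show that the map $I_\pi^*\circ\Phi$ inherits from $\Phi$ all three defining properties of a $p$-sweepout: (1) continuity in the flat (or $\mathbf{F}$) topology, (2) no concentration of mass, and (3) the cohomological condition $(I_\pi^*\circ\Phi)^*(\overline\lambda^p)\neq 0$ in $H^p(X;\Z_2)$.

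\begin{proof}
Write $\Psi = I_\pi^*\circ \Phi\colon X \to \mathcal{Z}_{n+r-1}(E;\Z_2)$ and let $C = \sup_{b\in B}\vol_{g_E}(E_b)$. By Proposition \ref{mod 2 fiber integration}, $I_\pi^*$ sends $\mathcal{Z}_{n-1}(B;\Z_2)$ into $\mathcal{Z}_{n+r-1}(E;\Z_2)$ and is Lipschitz with constant $C$ with respect to the $\mathcal{F}^2$ and $\mathbf{F}^2$ metrics; composing with the continuous map $\Phi$, we conclude that $\Psi$ is continuous in the flat topology (and, if $\Phi$ is taken $\mathbf{F}$-continuous, so is $\Psi$). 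Moreover $X$ is a finite-dimensional compact cubical complex, so $\Psi$ has the correct domain.

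\textbf{No concentration of mass.} Fix $x \in X$ and a point $e \in E$, and let $b = \pi(e)$. Since $\pi$ is a Riemannian submersion, it is $1$-Lipschitz, so $\pi(B^E_r(e)) \subseteq B^B_r(b)$, where $B^E_r$ and $B^B_r$ denote geodesic balls in $E$ and $B$. By the local product description of $I_\pi^*$ in Lemma \ref{lem:local}(i) together with the comass estimate of Lemma \ref{lem:comass} (or, more directly, by the mass bound of Lemma \ref{lem:mass} applied after restricting to the preimage of a ball), one obtains
\[
\|\Psi(x)\|(B^E_r(e)) \;\leq\; C\, \|\Phi(x)\|(B^B_r(b)) \;\leq\; C\, \mathbf{m}(\Phi,r).
\]
Taking the supremum over $x\in X$ and $e\in E$ gives $\mathbf{m}(\Psi,r) \leq C\,\mathbf{m}(\Phi,r)$, and since $\Phi$ has no concentration of mass, $\lim_{r\to 0}\mathbf{m}(\Psi,r) = 0$.

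\textbf{The cohomological condition.} This is the main point. Recall that $\mathcal{Z}_m(N;\Z_2)$ is weakly homotopy equivalent to $\R P^\infty$, and the class $\overline\lambda \in H^1(\mathcal{Z}_m(N;\Z_2);\Z_2)$ is the generator, which is detected on $\pi_1$: a loop $\gamma$ in $\mathcal{Z}_m(N;\Z_2)$ is nontrivial in $\pi_1$ iff $\gamma^*\overline\lambda \neq 0$, and this loop, viewed via the Almgren isomorphism $\pi_1(\mathcal{Z}_m(N;\Z_2),0)\cong H_{m+1}(N;\Z_2)$, corresponds to the fundamental class when $m = \dim N - 1$. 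The strategy is to show that $I_\pi^*\colon \mathcal{Z}_{n-1}(B;\Z_2) \to \mathcal{Z}_{n+r-1}(E;\Z_2)$ pulls back $\overline\lambda_E$ to $\overline\lambda_B$, i.e. induces (up to homotopy) the standard inclusion $\R P^\infty \hookrightarrow \R P^\infty$ inducing an isomorphism on $H^1$. It suffices to check this on $\pi_1$: given a loop $\gamma$ in $\mathcal{Z}_{n-1}(B;\Z_2)$ representing the fundamental class $[B]\in H_n(B;\Z_2)$ under Almgren's isomorphism, one must show $I_\pi^*\circ \gamma$ represents a nonzero class in $\pi_1(\mathcal{Z}_{n+r-1}(E;\Z_2),0)$. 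Via the Almgren isomorphism for $E$, the class of $I_\pi^*\circ\gamma$ in $H_{n+r}(E;\Z_2)$ is computed by taking an Almgren filling, i.e. an $(n+r)$-chain in $E$ swept out by the loop; but the lifted loop $t\mapsto I_\pi^*(\gamma(t))$ is, by Lemma \ref{lem:local}(i) and the naturality in (ii), exactly the fiber-integral of an Almgren filling of $\gamma$, and since $I_\pi^*[\![B]\!] = [\![E]\!]$ (the lemma just proved), this filling is $[\![E]\!]$, the fundamental class of $E$. Hence $I_\pi^*\circ\gamma$ is nontrivial on $\pi_1$, so $I_\pi^*$ pulls $\overline\lambda_E$ back to $\overline\lambda_B$, and therefore $(\overline\lambda_E)^p$ back to $(\overline\lambda_B)^p$ by the ring structure on cohomology. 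Consequently
\[
\Psi^*(\overline\lambda_E^p) = \Phi^*\bigl((I_\pi^*)^*(\overline\lambda_E^p)\bigr) = \Phi^*(\overline\lambda_B^p) \neq 0 \quad \text{in } H^p(X;\Z_2),
\]
the last inequality because $\Phi$ is a $p$-sweepout. This shows $\Psi \in \mathcal{P}_p(E)$.
\end{proof}

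\textbf{Main obstacle.} The delicate step is the cohomological one: making precise that $I_\pi^*$ realizes the generator $\overline\lambda$ correctly. The cleanest route is the $\pi_1$-level argument via Almgren's isomorphism together with the identity $I_\pi^*[\![B]\!] = [\![E]\!]$ established above, which identifies the Almgren filling of the lifted loop with the fundamental class of $E$; one should verify that the fiber-integration of a continuous path of fillings is again a continuous path (which follows from Lemma \ref{lem:continuous} and its mod $2$ analogue) so that the construction descends to the homotopy/homology level. An alternative, if one wants to bypass Almgren's isomorphism, is to note that $I_\pi$ induces an isomorphism $H^{n+r}_{dR}(E)\to H^n_{dR}(B)$ (noted at the end of the Fiber integration subsection), dualize, and relate the cohomology class $\overline\lambda$ to the pullback of the generator of $H^1(\R P^\infty;\Z_2)$ through the classifying-space description of $\mathcal{Z}_*$; but the $\pi_1$-argument is more self-contained given what has already been set up.
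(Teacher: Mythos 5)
Your proposal is correct and follows essentially the same route as the paper: continuity via the Lipschitz properties of $I_\pi^*$, the no-concentration-of-mass estimate via the mass bound applied over geodesic balls (using that $\pi$ is $1$-Lipschitz), and the cohomological condition reduced to a $\pi_1$-nontriviality check using $I_\pi^*[\![B]\!]=[\![E]\!]$. The only cosmetic difference is in phrasing the $\pi_1$ step: you invoke Almgren's isomorphism and fillings directly, whereas the paper lifts the loop $\gamma$ to a path $\tilde\gamma$ in $\mathbf{I}_n(B;\Z_2)$ from $0$ to $[\![B]\!]$ and observes that $I_\pi^*\circ\tilde\gamma$ connects $0$ to $[\![E]\!]$ in the double cover $\mathbf{I}_{n+r}(E;\Z_2)\to\mathcal{Z}_{n+r-1}(E;\Z_2)$, so $I_\pi^*\circ\gamma$ cannot be nullhomotopic; these are two formulations of the same argument.
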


The proof follows by combining the characterization of $p$-sweepouts in Marques-Neves \cite{MarquesNevesRicci} with the description of the fundamental groups of these spaces of cycles in terms of the covering maps from $\mathbf{I}_n(B;\Z_2)=\mathcal{C}(B)$ and $\mathbf{I}_{n+r}(E;\Z_2) = \mathcal{C}(E)$. 

\begin{proof}
Recall that $I_\pi^*[\![B]\!] = [\![E]\!]$. If $\gamma$ is a representative of the generator of 
\[
\pi_1(\mathcal{Z}_{n-1}(B;\Z_2),0)\simeq H_{n}(B;\Z_2) \simeq Z_2,
\]
then $\gamma\colon [0,1] \to \mathcal{Z}_{n-1}(B;\Z_2)$ is a closed path with $\gamma(0)=0=\gamma(1)$ that lifts to some $\tilde \gamma\colon [0,1] \to \mathbf{I}_n(B;\Z_2)$ with $\tilde \gamma(0) = 0$ and $\tilde \gamma(1) = [\![B]\!]$. Hence, $I_\pi^* \circ \gamma \colon [0,1] \to \mathcal{Z}_{n+r -1}(E;\Z_2)$ is a closed path based at the zero cycle in $E$, and $I_\pi^*\circ \tilde \gamma \colon [0,1] \to \mathbf{I}_{n+r}(E;\Z_2)$ is a path with $I_\pi^*\circ \tilde \gamma(0) = I_\pi^*(0)=0$ and $I_\pi^*\circ \tilde \gamma(1) = I_\pi^* [\![B]\!] = [\![E]\!]$. Therefore, $I_\pi^* \circ \tilde \gamma$ is homotopically nontrivial. 

Since the fundamental group of $\mathcal{Z}_{n+r-1}(E;\Z_2)$ is also isomorphic to $\Z_2$, we conclude that $I_\pi^*$ induces an isomorphism between fundamental groups. In addition, since these fundamental groups are abelian, it also induces isomorphisms in 1st homology and 1st cohomology with $\Z_2$ coefficients (see e.g. \cite[Section 2.A]{Hatcher}).

Now since $\Phi \colon X \to \mathcal{Z}_{n-1}(B;\Z_2)$ is a $p$-sweepout of $B$, if we let $\lambda_B$ be the generator of $H^*(\mathcal{Z}_{n-1}(B;\Z_2);\Z_2)$ and similarly for $\lambda_E$, then $\lambda = \Phi^*(\lambda_B) \in H^1(X;\Z_2)$ satisfies $\lambda^p \neq 0 \in H^p(X;\Z_2)$. On the other hand, by the isomorphism described above, 
\[(I_\pi^*)^* \colon H^1(\mathcal{Z}_{n+r-1}(E;\Z_2);\Z_2) \to H^1(\mathcal{Z}_{n-1}(B;\Z_2);\Z_2)\]
maps $\lambda_E$ to the generator $\lambda_B$, and hence
\[(I_\pi^*\circ \Phi)^*(\lambda_E) = \Phi^*((I_\pi^*)^*\lambda_E) = \Phi^*(\lambda_B) = \lambda\]
has nonzero $p$-th cup power. 

Finally, we observe that $I_\pi^* \circ \Phi$ has no concentration of mass. Given $e \in E$, the geodesic balls $B_\rho^{(E,g_E)}(e)$ in $(E,g_E)$ are mapped onto geodesic balls $B^{(B,g_B)}_\rho(\pi(e))$ by $\pi$ -- this can be readily derived from the following observation: if $\gamma\colon [0,1] \to E$ is a piecewise smooth curve, then $\ell_{(B,g_B)}(\pi \circ \gamma) \leq \ell_{(E,g_E)}(\gamma)$. If we write $B_\rho=B_\rho^{(B,g_b)}(\pi(e))$ and $U=\pi^{-1}(B_\rho)$, then $\pi$ restricts to a submersion $\pi|U\,\colon U \to B_\rho$ and, for any rectifiable current $T$, we have
    \begin{align*}
    \|I_\pi^*T\|(B^{(E,g_E)}_\rho(e)) & \leq \|I_\pi^*T\|\left( U \right) = \mathbf{M}(I_\pi^*T \ \lfloor \ U ) =\mathbf{M}\left(I_{\pi|U}^* \left(T \ \lfloor \ B_\rho \right)\right) \\
    &\leq \left( \sup_{b \in B_\rho} \vol_{g_E}(E_b)  \right) \cdot \mathbf{M}\left(T \ \lfloor B_\rho \right) \leq \left( \sup_{b \in B_\rho} \vol_{g_E}(E_b)  \right) \cdot \|T\|(B_\rho^{(B,g_B)}(\pi(e)).
    \end{align*}
Here we used the naturality of the fiber integration map, Lemma \ref{lem:local}, and the mass bound from Lemma \ref{lem:mass}. Since the constant above does not depend on $e$, $\rho$ or $T$, we conclude
    \begin{align*}
        & \lim_{\rho \to 0^+} \sup \left\{\, \|I_\pi^* \circ \Phi(x)\|(B_\rho^{(E,g_E)}(e)) \colon x \in X, \ e \in E\,\right\} \\
        & \qquad\leq \left( \sup_{b \in B_r} \vol_{g_E}(E_b)  \right) \cdot \lim_{\rho \to 0^+}\sup \left\{\, \| \Phi(x)\|(B_\rho^{(B,g_B)}(b)) \colon x \in X, \ b \in B\,\right\} = 0,
    \end{align*}
where we used that $\Phi$ has no concentration of mass.
\end{proof}

\begin{proof}[Proof of \ref{thm:width_fibration}]
Let $\Phi \in \mathcal{P}_p(B)$ be a $p$-sweepout for $(B,g_B)$ defined on some cubical complex $X$. By Proposition \ref{fiber integrated sweepout}, the map $I_\pi^* \circ \Phi \colon X \to \mathcal{Z}_{\dim E -1}(E;\mathbf{F};\Z_2)$ is a $p$-sweepout of $(E,g_E)$. Moreover, by Proposition \ref{mod 2 fiber integration}, we have
    \[\mathbf{M}(I_\pi^*(\Phi(x)) \leq \left( \sup_{b \in B} \vol_{g_E}(E_b) \right)\, \mathbf{M}(\Phi(x)),\]
for all $x \in X$, where we write $\mathbf{M}^2$ as $\mathbf{M}$ for simplicity. Thus,
    \[\omega_p(E,g_E) \leq \sup_{x\in X}\mathbf{M}(I_\pi^*(\Phi(x)) \leq \left( \sup_{b \in B} \vol_{g_E}(E_b) \right)\, \sup_{x \in X}\mathbf{M}(\Phi(x)).\]
Since $\Phi$ is an arbitrary $p$-sweepout for $(B,g_B)$, by taking the infimum over $\Phi$, we conclude
    \[\omega_p(E,g_E) \leq \left( \sup_{b \in B} \vol_{g_E}(E_b) \right)\,\omega_p(B,g_B).\qedhere\]
\end{proof}

\section{Upper bounds for the Allen-Cahn spectrum} \label{allen-cahn}

\subsection{Phase transition spectrum of Riemannian fibrations} Assume again $\pi\colon E \to B$ is a fiber bundle with compact base space $B$ and compact fiber $F$, and that $E$ and $B$ are endowed with Riemannian metric $g_E$ and $g_B$, respectively, which make $\pi$ a Riemannian submersion. We recall the following Fubini-type formula \cite[Chapter II, Theorem 5.6]{Sakai}:
\begin{equation} \label{fubini}
    \int_E f\,d\mu_{g_E} = \int_B \left( \int_{E_b} f\, d\mu_{E_b}\right) \,d\mu_{g_B},
\end{equation}
where $\mu_{g_E}$ and $\mu_{g_B}$ are the respective Riemannian volume measures, and $\mu_{E_b}$ is the volume measure corresponding to the metric induced by $g_E$ in the fibers $E_b=\pi^{-1}(b)$.

\begin{lem}
The map
\[
f \in W^{1,2}(B) \mapsto f\circ \pi \in W^{1,2}(E)
\]
is a linear bounded map with
\begin{equation} \label{eq:l2estimate}
\|f\circ \pi\|_{L^2(E,g_E)}^2 \leq \left( \sup_{b \in B} \vol_{g_E}(\pi^{-1}(b)) \right) \|f\|_{L^2(B,g_B)}^2,
\end{equation}
and
\begin{equation} \label{eq:gradientestimate}
\|\nabla^{g_E}(f\circ \pi)\|_{L^2(E,g_E)}^2 \leq \left( \sup_{b \in B} \vol_{g_E}(\pi^{-1}(b)) \right) \|\nabla^{g_B}f\|_{L^2(B,g_B)}^2.
\end{equation}
\end{lem}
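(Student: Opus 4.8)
The plan is to prove the estimates first for smooth $f \in C^\infty(B)$ by reducing everything to pointwise identities and the Fubini-type formula \eqref{fubini}, and then to extend to general $f \in W^{1,2}(B)$ by density. Throughout, set $C = \sup_{b\in B}\vol_{g_E}(E_b)$, which is finite since $B$ is compact and $b \mapsto \vol_{g_E}(E_b)$ is continuous.

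\textbf{Step 1: pointwise identities for smooth $f$.} For $f \in C^\infty(B)$, the composition $f\circ\pi$ is smooth and constant on each fiber $E_b$, with value $f(b)$; in particular $(f\circ\pi)^2$ is constant equal to $f(b)^2$ on $E_b$. Since $f\circ\pi$ is constant along fibers, $d(f\circ\pi)_e$ vanishes on the vertical space $T_eE_b$, so $\nabla^{g_E}(f\circ\pi)(e)$ lies in the horizontal space $H_e := (T_eE_b)^\perp$. Because $\pi$ is a Riemannian submersion, $D\pi(e)$ restricts to a linear isometry $H_e \to T_bB$ ($b = \pi(e)$); choosing $v$ to run over a $g_E$-orthonormal basis of $H_e$, whose image under $D\pi(e)$ is a $g_B$-orthonormal basis of $T_bB$, and using
\[
\langle \nabla^{g_E}(f\circ\pi)(e), v\rangle_{g_E} = d(f\circ\pi)_e(v) = df_b(D\pi(e)v) = \langle \nabla^{g_B}f(b), D\pi(e)v\rangle_{g_B},
\]
we obtain the pointwise identity $|\nabla^{g_E}(f\circ\pi)(e)|_{g_E} = |\nabla^{g_B}f(\pi(e))|_{g_B}$ for all $e \in E$. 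In particular $|\nabla^{g_E}(f\circ\pi)|^2$ is constant equal to $|\nabla^{g_B}f(b)|^2$ on $E_b$.

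\textbf{Step 2: the estimates for smooth $f$.} Applying \eqref{fubini} to $(f\circ\pi)^2$ and to $|\nabla^{g_E}(f\circ\pi)|^2$, and using that these integrands are constant along each fiber, gives
\[
\|f\circ\pi\|_{L^2(E,g_E)}^2 = \int_B f(b)^2\,\vol_{g_E}(E_b)\,d\mu_{g_B}(b) \leq C\,\|f\|_{L^2(B,g_B)}^2,
\]
\[
\|\nabla^{g_E}(f\circ\pi)\|_{L^2(E,g_E)}^2 = \int_B |\nabla^{g_B}f(b)|^2\,\vol_{g_E}(E_b)\,d\mu_{g_B}(b) \leq C\,\|\nabla^{g_B}f\|_{L^2(B,g_B)}^2,
\]
which are precisely \eqref{eq:l2estimate} and \eqref{eq:gradientestimate}; hence $\|f\circ\pi\|_{W^{1,2}(E)}^2 \leq C\|f\|_{W^{1,2}(B)}^2$ for smooth $f$.

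\textbf{Step 3: density.} For general $f \in W^{1,2}(B)$, pick $f_k \in C^\infty(B)$ with $f_k \to f$ in $W^{1,2}(B)$. The $L^2$ bound in Step 2 requires no smoothness beyond measurability of $f\circ\pi$, which holds for any $f \in L^2(B)$ since \eqref{fubini} applied to indicator functions shows that $\pi$ pulls $\mu_{g_B}$-null sets back to $\mu_{g_E}$-null sets; hence $f_k\circ\pi \to f\circ\pi$ in $L^2(E)$. On the other hand, Step 2 applied to $f_k - f_j$ shows that $\{f_k\circ\pi\}$ is Cauchy in $W^{1,2}(E)$, so it converges there to some $h$, which must agree with the $L^2$-limit $f\circ\pi$. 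Therefore $f\circ\pi \in W^{1,2}(E)$, and passing to the limit in the estimates of Step 2 for $f_k$ yields \eqref{eq:l2estimate} and \eqref{eq:gradientestimate} for $f$. Linearity of $f \mapsto f\circ\pi$ is immediate.

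\textbf{Main obstacle.} The geometry (Step 1) and Fubini (Step 2) are routine; the only genuinely delicate points are handled in Step 3, namely (a) making sense of $f\circ\pi$ as an element of $L^2(E)$ when $f$ is merely an equivalence class, i.e. that $\pi$ carries no positive-measure set into a null set, and (b) identifying the weak gradient of $f\circ\pi$ with the horizontal lift of the weak gradient of $f$ in the non-smooth case — which the $W^{1,2}$-Cauchy argument delivers automatically once the bounded map on $C^\infty(B)$ is extended by density.
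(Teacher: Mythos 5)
Your proposal is correct and follows essentially the same route as the paper: establish the pointwise identity $|\nabla^{g_E}(f\circ\pi)| = |\nabla^{g_B}f|\circ\pi$ using the Riemannian submersion structure (the paper phrases this via $\pi$-related orthonormal frames; you phrase it via the isometry $D\pi(e)\colon H_e \to T_bB$, which is the same geometric content), apply the Fubini formula \eqref{fubini} to get the two estimates for smooth $f$, and conclude by density. Your Step 3 is more detailed than the paper's one-line appeal to density, in particular the observation that $\pi$ pulls null sets back to null sets so that $f\circ\pi$ is well defined on equivalence classes, but the argument is the same in substance.
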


\begin{proof}
Given an $g_B$-orthonormal frame $\{E_j\}_{j=1}^{n}$ on an open set $U\subset B$, we can find a $g_E$-orthonormal frame $\{X_j\}_{j=1}^{n+r}$ such that $X_j$ and $E_j$ are $\pi$-related for $j=1,\ldots, n$, and $X_j(e) \in \ker D\pi(e) = T_e(E_{\pi(e)})$ for all $j=n+1,\ldots, n+r$ and $e \in E$. For any $f \in C^\infty(B)$, if we let $F=f \circ \pi$, then
\[
g_E(X_j,\nabla^{g_E}F) = X_j(f\circ \pi) = E_j(f) = g_B(E_j,\nabla^{g_B}f), \qquad \text{for} \ j=1,\ldots,n, 
\]
while
\[
g_E(X_j,\nabla^{g_E}F) = X_j(f\circ \pi) = 0, \qquad \text{for} \ j=n+1,\ldots, n+r, 
\]
for $f\circ \pi|_{E_b} = f(b)$ is constant and $X_j$ is tangent to the fiber in the latter case. Therefore, $\nabla^{g_E}F$ and $\nabla^{g_B}f$ are $\pi$-related and
\[
|\nabla^{g_E}F|_{g_E} \circ \pi = |\nabla^{g_B}f|_{g_B}.
\]
Then \eqref{eq:gradientestimate} follows from:
\begin{align}\label{Dirichlet norm bundle}
\|\nabla^{g_E}F\|_{L^2(E,g_E)}^2 &= \int_E|\nabla^{g_E}F|_{g_E}^2\,d\mu_{g_E} = \int_B \left( \int_{E_b} |\nabla^{g_B}f|_{g_B}^2\, d\mu_{E_b}\right) \,d\mu_{g_B} \nonumber\\
&= \int_B \vol_{g_E}(E_b)\cdot|\nabla^{g_B}f|_{g_B}^2\,d\mu_{g_B} \leq \left( \sup_{b \in B} \vol_{g_E}(E_b)\right)\|\nabla^{g_B}f\|_{L^2(B,g_B)}^2.
\end{align}
Since \eqref{eq:l2estimate} follows directly from Fubini's formula above, by the density of smooth functions in $W^{1,2}$, this finishes the proof. 
\end{proof}

Now we are ready to bound the Allen-Cahn width of the fiber bundle by the product of the Allen-Cahn width of the base and the volume of the largest fiber. 

\begin{proof}[Proof of \ref{thm:phasetransition}]

Let $\Phi \colon X \to W^{1,2}(B)$ be a $p$-sweepout for $E_\e$ on $(B,g_B)$ with $\sup_{B}|\Phi(x)|\leq 1$ for all $x \in X$. Define a new family of functions $\Psi \colon X \to W^{1,2}(E)$ by 
\[
\Psi(x) = \Phi(x) \circ \pi, \qquad x \in X.
\]
By the observations above, $\Psi$ is continuous map. Moreover, by the $\Z_2$ equivariance of $\Phi$ and the linearity of $u\mapsto u\circ \pi$, we see that $\Psi$ is a $p$-sweepout in $E$. Moreover, by combining the Fubini theorem and the gradient estimate \eqref{eq:gradientestimate}, we obtain:
\[
E_\e(\Psi(x),g_E) \leq \left( \sup_{b \in B} \vol_{g_E}(E_b)\right) E_\e(\Phi(x),g_B) \leq \left( \sup_{b \in B} \vol_{g_E}(E_b)\right) \cdot c_\epsilon(p; B,g_B),
\]
for any $x \in X$. By taking the supremum over $x$ and the infimum over $p$-sweepouts $\Phi$, we conclude:
\begin{equation}
c_\epsilon(p;E,g_E) \leq \left( \sup_{b \in B} \vol_{g_E}(E_b)\right) \cdot c_\epsilon(p; B,g_B).
\end{equation}

\end{proof}

\section{The isoperimetric profile of sphere bundles}

Let $\pi\colon(E,g_E) \to (B,g_B)$ be a Riemannian fibration, with totally geodesic fibers $E_b = \pi^{-1}(b)$ isometric to the round sphere $S^r$ of dimension $r$. In this section, we would like to compare the isoperimetric profiles of $E$ and $B\times S^r$. In view of Lemma \ref{leastarea_isoperimetric}, in some concrete examples, this can be used to obtain lower bounds for the first width of $E$. 

The comparison result for isoperimetric profiles stated below has previously appeared in the work of Morgan-Howe-Harman \cite[Propositions 5 and 8]{MHH} (see also Ros \cite{Ros} and Morgan \cite{Morgan07}), where the authors develop a symmetrization method for subsets of warped products which intersect almost every fiber transversely. A closely related fiberwise symmetrization method was recently employed by C. Sung \cite{SungSymmetrization,Sung} to obtain lower bounds for the first eigenvalue of the Laplace operator on a Riemannian bundle $E \to B$ with totally geodesic fibers in terms of the first eigenvalue of the product of $B$ with those fibers. This class of manifolds was studied in depth in the classical work by Bergery-Bourguignon \cite{BBLaplacian}.

We will consider metrics on $E$ with fibers dilated by a constant factor. Concretely, let $H = (\ker D\pi) ^\perp\subset TE$ be horizontal distribution associated to $\pi$. The Riemannian metrics $g_E^\rho$ on $E$ defined by replacing $g_E|_{H^\perp}$ by $\rho^2 g_E|_{H^\perp}$, for each $\rho>0$, are called \emph{canonical variations} \cite{BBLaplacian} of $g_E$ and define a one-parameter family of Riemannian submersions $\pi \colon (E,g_E^\rho) \to (B,g_B)$.\medskip

We recall the notation from Subsection \ref{subsec:ac} concerning the Allen-Cahn equation and its connection to the isoperimetric problem. Additionally, if $X$ is a space with a right $G$-action and $\phi$ is a function defined on $X$, we write $\phi \cdot g$ to denote the function defined on $X$ by $(\phi\cdot g)(x)=\phi(x\cdot g)$.

In view of the connections between isoperimetric inequalities and the eigenvalues of the Laplacian \cite{Chavel}, we expect that Sung's strategy can also be used to obtain comparison results for isoperimetric profiles. In this section, we follow Sung's method to show:

\begin{thm} \label{thm:lower_bound}
Suppose that the double-well $W$ satisfies the following additional condition: $|W(s)| \to  \infty$ as $|s| \to \infty$. For any $m \in (-\vol(E,g_E), \vol(E,g_E))$, any $\rho>0$, and any $\e>0$, we have:
\[
    \inf\{E_\e(u) \colon u \in V_m(E,g^\rho_E) \} \geq \inf\{E_\e(v) \colon v \in V_m(B\times S^r(\rho)) \},
\]
where $B \times S^r(\rho)$ is endowed with the product metric. Consequently, the isoperimetric profiles of $(E,g_E)$ and $B\times S^r(\rho)$ satisfy
\[
I_{(E,g_E^\rho)} (t) \geq I_{(B\times S^r(\rho))} (t), \qquad \text{for all} \ t \in (0,\vol(E,g_E^\rho)).
\]
\end{thm}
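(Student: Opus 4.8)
The plan is to adapt C. Sung's fiberwise symmetrization argument \cite{Sung} to the Allen-Cahn energy. Fix $\rho > 0$, $m$, and $\e$, and let $u \in V_m(E, g^\rho_E)$ be a smooth solution of the nonhomogeneous Allen-Cahn equation achieving $\inf\{E_\e(\cdot)\}$; such a minimizer exists by the direct method, where the extra hypothesis $|W(s)| \to \infty$ as $|s|\to\infty$ is used to get coercivity and rule out concentration of mass at infinity. The goal is to produce a competitor $v \in V_m(B \times S^r(\rho))$ with $E_\e(v) \leq E_\e(u)$. The idea is to replace the restriction $u|_{E_b}$ on each fiber by its \emph{spherical (Schwarz) symmetrization}: since each fiber $E_b$ is isometric to $S^r(\rho)$, for almost every $b \in B$ we define $v(b, \cdot)$ to be the unique function on $S^r(\rho)$ depending only on the distance to a fixed pole, equidistributed with $u|_{E_b}$ (i.e., having the same distribution function with respect to the Riemannian measure on $S^r(\rho) \cong E_b$) and monotone in that distance. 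By the layer-cake/equidistribution property, this preserves all $L^p$ norms of $u$ along each fiber, hence by the Fubini formula \eqref{fubini} it preserves $\int_M u\, d\mu$ and $\int_M W(u)\, d\mu$; in particular $v \in V_m(B \times S^r(\rho))$ and the potential part of the energy is unchanged.

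The crux is controlling the Dirichlet energy. Here one decomposes $|\nabla^{g^\rho_E} u|^2 = |\nabla^V u|^2 + |\nabla^H u|^2$ into vertical and horizontal parts (using the orthogonal splitting $TE = V \oplus H$), and correspondingly $|\nabla^{B\times S^r(\rho)} v|^2 = |\nabla^{S^r(\rho)} v|^2 + |\nabla^B v|^2$. For the vertical/spherical part, the classical Pólya–Szegő inequality for spherical symmetrization on $S^r(\rho)$ gives, fiber by fiber,
\[
\int_{S^r(\rho)} |\nabla^{S^r(\rho)} v(b,\cdot)|^2 \leq \int_{E_b} |\nabla^V u|^2_{g^\rho_E}\, d\mu_{E_b},
\]
and integrating over $B$ handles the vertical contribution. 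For the horizontal part one must show $\int_B \int_{S^r(\rho)} |\nabla^B v|^2 \leq \int_E |\nabla^H u|^2_{g^\rho_E}$. This is exactly the step where the \emph{totally geodesic} hypothesis on the fibers enters: it guarantees that parallel transport along horizontal curves is a fiber isometry (the holonomy of the connection $H$ acts by isometries of the fiber, by Hermann's theorem / the Bergery–Bourguignon structure theory \cite{BBLaplacian}), so that the symmetrization can be performed coherently with respect to a choice of pole that is parallel along $B$, and the horizontal derivative of $v$ is controlled by that of $u$ via a one-variable rearrangement argument on the level sets — this is the content of Sung's lemmas, which go through verbatim here since they are statements about functions, not about the Laplace operator specifically. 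The fact that the canonical variation $g^\rho_E$ has fibers isometric to $S^r(\rho)$ and that scaling the fibers commutes with all of the above is routine.

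Combining the vertical and horizontal estimates with the invariance of the potential term yields $E_\e(v) \leq E_\e(u) = \inf\{E_\e(\cdot) \colon V_m(E,g^\rho_E)\}$, hence $\inf\{E_\e(\cdot)\colon V_m(B\times S^r(\rho))\} \leq \inf\{E_\e(\cdot)\colon V_m(E,g^\rho_E)\}$, which is the first assertion. The isoperimetric profile inequality then follows by letting $\e \downarrow 0$: by the Modica–Mortola theory recalled in Subsection \ref{subsec:ac}, $\frac{1}{2\sigma}\lim_{\e\downarrow 0}\inf\{E_\e(\cdot)\colon V_m(\cdot)\} = I(\tfrac{\vol - m}{2})$ on each of the two spaces (noting $\vol(E,g^\rho_E) = \vol(B\times S^r(\rho))$ by Fubini), so passing to the limit in the energy inequality and reparametrizing $t = \tfrac{\vol-m}{2}$ gives $I_{(E,g^\rho_E)}(t) \geq I_{(B\times S^r(\rho))}(t)$ for all $t \in (0, \vol(E,g^\rho_E))$. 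The main obstacle is the rigorous treatment of the horizontal Dirichlet term — making precise the coherent choice of symmetrization poles and the rearrangement estimate on horizontal level sets, where the totally geodesic assumption and the holonomy-by-isometries property are essential; the vertical term and the potential invariance are standard rearrangement facts, and the $\e \to 0$ passage is a direct citation.
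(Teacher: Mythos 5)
Your outline correctly identifies the ingredients (Pólya–Szegő on fibers, splitting the Dirichlet energy into vertical and horizontal parts, the Modica $\e\downarrow 0$ passage), but there is a genuine gap exactly where you flag the ``main obstacle.'' You propose to symmetrize $u|_{E_b}$ directly fiber by fiber on $E$, ``with respect to a choice of pole that is parallel along $B$.'' In general no such pole exists: by Hermann's theorem the holonomy of the horizontal distribution acts on the fiber $S^r$ through a subgroup of $SO(r+1)$, and this action can be transitive — for the Hopf bundle $S^3 \to S^2(1/2)$, the motivating example of the paper, the holonomy group is all of $SO(2)$ acting transitively on $S^1$. So there is no globally consistent ``parallel pole,'' and the fiberwise symmetrization does not produce a well-defined function on $B\times S^r(\rho)$ whose horizontal derivative can be compared to $|\nabla^H u|$; the assertion that ``Sung's lemmas go through verbatim'' begs precisely this question.

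The paper's proof circumvents this by not symmetrizing on $E$ at all. It lifts $u_{\e,m}$ via the quotient map $\tilde\pi\colon P\times S^r \to E$, where $P\to B$ is the associated $SO(r+1)$-principal bundle; on $P\times S^r$ there \emph{is} a globally fixed $S^r$ factor with a fixed pole, so symmetrization in that factor makes sense. The resulting $\tilde u_{\e,m}^*$ is in general \emph{not} $G$-invariant (this is the same obstruction you glossed over), so the paper averages $\tilde u_{\e,m}^*$ over $G$ to restore invariance before descending to $B\times S^r(\rho)$. This averaging is not free: it requires a preliminary $C^2$ $\delta$-perturbation of $\tilde u_{\e,m}$ so that the $G$-orbit of the symmetrized function stays $C^2$-close (Sung's \cite[Corollary 3.2]{SungSymmetrization}), and it requires the uniform $L^\infty$ bound on the minimizer — which is where the extra coercivity hypothesis on $W$ actually enters, not for existence of the minimizer as you suggest, but to estimate $|W(\tilde u^*_{\e,m}) - W(\tilde u_{\e,m})|$ and its averaged version. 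The $\delta$-error terms are then sent to zero. Your sketch also quietly assumes the potential and mass constraints are preserved exactly by symmetrization, whereas in the paper they are preserved only up to $O(\delta)$ (with a small constant added to fix the mass constraint). So the strategy is close in spirit, but the structural detour through $P\times S^r$ and the $G$-averaging step are not optional details — they are the argument.
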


\begin{proof}
We follow closely the proof of Theorem 1.1 in \cite{Sung}. By \cite{Hermann}, we see that $E\to B$ is a $G$-principal bundle, for the compact Lie group $G= SO(r+1)$ endowed with a bi-invariant Riemannian metric $g_G$. Furthermore, we can find a smooth $G$-principal bundle $\pi_P \colon P \to B$ such that $E$ is the associated bundle of $P$ with fiber $S^r$. The space $P$ is the pointwise union of the spaces of isometries $S^r \to \pi^{-1}(b)$, for $b \in B$ -- we refer to p. 239 in \cite{Hermann} for more details. In particular, there is a smooth quotient map $\tilde \pi \colon P \times S^r \to E$, and by \cite[Theorem 3.5]{Vilms}, we can endow $P$ with a Riemannian metric $g_P$  for which $\pi_P \colon (P,g_P) \to (B,g_B)$ is a Riemannian submersion.\smallskip

We endow $P \times S^r$ with the product metrics with fibers scaled by $\rho>0$, which we denote by $g_P+\rho^2 g_S$ (pulling back each metric to $P\times S^r$ by the natural projections from $P \times S^r$ onto $P$ and $S^r$). In general, the quotient map $\tilde \pi \colon (P\times S^r, g_P + \rho^2 g_S) \to (E,g_E^\rho)$ is \emph{not} a Riemannian submersion, nevertheless $T(P\times F)$ admits a $G$-invariant orthogonal decomposition as the sum of the tangent distribution to $\{p\} \times F$, the tangent distribution of the orbits of the $G$-action on the $P$ factor, and the horizontal spaces of the Riemannian bundle $\pi_P$ (lifted to $P\times F$); we refer to \cite[Lemma 3.2]{Sung} for more details.

In addition, one can locally define a $G$-invariant $(\dim G$)-form $\Omega_G$ on $P\times S^r$ which restricts to the volume of each fiber of $\pi_P\colon P \times S^r(\rho) \to B$ and which vanishes on the horizontal distribution of this submersion. These facts imply a Fubini-type formula for the quotient map $\tilde \pi \colon P\times S^r(\rho) \to E$ for functions of type $f \circ \tilde \pi$ and their Dirichlet energy. More precisely, \cite[Proposition 3.6]{Sung} shows that:
\begin{equation} \label{Fubini PxS}
\int_{P \times S^r(\rho)} (f\circ \tilde\pi) \,d\mu_{P \times S^r(\rho)} = \mathrm{vol}(G) \cdot \int_Ef \,d\mu_{g_E^\rho},
\end{equation}
for any $f \in L^1(E)$, and 
\begin{equation} \label{Fubini derivatives PxS}
\int_{P \times S^r(\rho)} |d(f\circ \tilde \pi) \wedge \Omega_G|^2_{P\times S^r(\rho)} \,d\mu_{P \times S^r(\rho)} = \mathrm{vol}(G) \cdot \int_E|\nabla f|^2_{g_\rho} \,d\mu_{g_E^\rho}.
\end{equation}
\smallskip

Let $u_{\epsilon,m}\in C^\infty(E)$ be a minimizer of $E_\e$ on $V_m(E,g_E^\rho)$, and let $\tilde u_{\e,m}=u_{\e,m} \circ \tilde \pi$ be its lift to $P\times S^r$. By our assumption on $W$, there exists $K=K(W,E,g_E,\rho)>0$ such that
\begin{equation} \label{L infinity estimate}
        \|\tilde u_{\e,m}\|_{L^\infty(P\times S^r(\rho))} = \|u_{\e,m}\|_{L^\infty(E,g_E^\rho)} \leq K,
\end{equation}
see e.g. \cite{GurtinMatano}.
Given $\delta>0$, we can find a $\delta$-close $C^2$ perturbation of $\tilde u_{\e,m}$ whose spherical symmetrization in each $S^r(\rho)$ factor is a Lipschitz function $\tilde u_{\e,m}^*$ defined on $P \times S^r(\rho)$ with controlled Allen-Cahn energy. In fact, by \cite[Theorem 3.9]{Sung} (with $c=1$, as the fibers of $E\to B$ are spheres; see also \cite[Chapter 7]{Baernstein}), we can choose $\tilde u_{\e,m}^*$ so that
    \begin{equation} \label{Dirichlet symmetrization}
    \int_{P \times S^r(\rho)} |d\tilde u_{\e,m}^*\wedge \Omega_G|^2\,d\mu_{P\times {S^r(\rho)}} \leq \int_{P\times S^r(\rho)} |d\tilde u_{\e,m}\wedge \Omega_G|^2\,d\mu_{P\times {S^r(\rho)}} + C_0\delta,
    \end{equation}
where the term involving $C_0=C_0(E,B,\pi,\rho)>0$ comes from the perturbation of $\tilde u_{\e,m}$, and
\begin{equation} \label{L1 estimate}
\int_{P\times S^r(\rho)}\left| \tilde u_{\e,m}^{*} -\tilde u_{\e,m}\right|\,d\mu_{P \times S^r(\rho)} \leq \vol(P,g_P)\cdot\rho^r \vol(S^r) \cdot \delta,
\end{equation}
from which we get
    \begin{equation} \label{potential symmetrization}
    \int_{P \times S^r(\rho)} W(\tilde u_{\e,m}^*)\,d\mu_{P\times {S^r(\rho)}} \leq \int_{P \times S^r(\rho)} W(\tilde u_{\e,m})\,d\mu_{P\times {S^r(\rho)}} + C_1 \delta,
    \end{equation}
where $C_1=C_1(W,E,B,\pi,\rho)>0$ and we used \eqref{L infinity estimate} to estimate $|W(\tilde u_{\e,m}^*)-W(\tilde u_{\e,m})|$ in terms of $\|\tilde u_{\e,m}^*-\tilde u_{\e,m}\|_{L^\infty}$ and $\|W'\|_{L^\infty([-K,K])}$. 

In general, the symmetrization $\tilde u_{\e,m}$ may not be $G$-invariant, but its orbit by the natural $G$-action in the space of functions remain $\delta$-close to $\tilde u_{\e,m}$ in the $C^2$ sense, namely (see \cite[Corollary 3.2]{SungSymmetrization})
    \begin{equation} \label{C2 equivariance}
        \sup_{g\in G}\|\tilde u_{\e,m}^*\cdot g - \tilde u_{\e,m}\|_{C^2(P\times S^r(\rho))}<\delta.
    \end{equation}

We also observe that, by \eqref{L1 estimate} and \eqref{L infinity estimate}, by adding a small constant to $\tilde u_{\e,m}^*$, we may assume that 
\begin{equation} \label{integral pullback}
\int_{P\times S^r(\rho)}\tilde u_{\e,m}^{*} \, d\mu_{P\times S^r(\rho)} = \int_{P\times S^r(\rho)}\tilde u_{\e,m} \, d\mu_{P\times S^r(\rho)} = \vol(G) \cdot m.
\end{equation}

We now average $\tilde u_{\e,m}^*$ over the fibers of the $G$ action on the first factor of $P \times S^r(\rho)$ (whose orbits are the fibers of the bundle $P \to B$) in order to define a $G$-invariant function on $P\times S^r(\rho)$ with controlled energy and which then passes to the $G$-quotient $B\times S^r(\rho)$. Concretely, define $\tilde u_{\e,m}^{\circledast}\colon P \times S^r(\rho)\to \R$ by
    \[
    \tilde u_{\e,m}^{\circledast}(p,x) = \frac{1}{\vol(G)} \int_G \tilde u_{\e,m}^*(p\cdot g,x) \,d\mu_{g_G}(g).
    \]
By the invariance of the metric on $G$, the function $u_{\e,m}^{\circledast}$ is $G$-equivariant, namely $u_{\e,m}^{\circledast}(p\cdot g,x) = u_{\e,m}^{\circledast}(p,x)$ for all $(p,x) \in P \times S^r(\rho), g \in G$. In addition, $u_{\e,m}^{\circledast}\in L^2(P\times S^r(\rho))$ (by \cite[Proposition 2.3]{Sung}) and, using \eqref{C2 equivariance}, we see that
    \begin{align*}
        |\tilde u_{\e,m}^{\circledast}(p,x)-\tilde u^*_{\e,m}(p,x)|& = \frac{1}{\vol(G)}\left|\tilde u^*_{\e,m}(p,x) \int_G\,d\mu_{g_G} - \int_G \tilde u_{\e,m}^*(p\cdot g,x) \,d\mu_{g_G}(g) \right| \\
        & \leq \frac{1}{\vol(G)} \int_G\left | \tilde u_{\e,m}(p,x)^* - (\tilde u^*_{\e,m}\cdot g)(p,x) \right|\,d\mu_{g_G} \leq \delta,
    \end{align*}
for a.e. $(p,x) \in P \times S^r(\rho)$,
    \begin{equation} \label{potential averaging}
    \int_{P\times S^r(\rho)}W(\tilde u_{\e,m}^{\circledast}) \,d\mu_{P\times S^r(\rho)} \leq \int_{P\times S^r(\rho)} W(\tilde u_{\e,m}^*)\,d\mu_{P\times S^r(\rho)} + C\delta,
    \end{equation}
where hereafter we denote by $C$ a positive constant depending only on $(W,E,B,\pi,\rho)$ (possibly changing from line to line). Moreover, by \cite[Lemmas 2.4 and 3.9]{Sung} we have $\tilde u_{\e,m}^{\circledast} \in W^{1,2}(P \times S^r(\rho))$ and
    \begin{equation} \label{Dirichlet averaging}
       \int_{P \times S^r(\rho)} |d\tilde u_{\e,m}^{\circledast}\wedge \Omega_G|^2\,d\mu_{P\times {S^r(\rho)}}\leq \int_{P \times S^r(\rho)} |d\tilde u_{\e,m}^*\wedge \Omega_G|^2\,d\mu_{P\times {S^r(\rho)}}.
    \end{equation}

By the $G$-invariance of $\tilde u_{\e,m}^{\circledast}$, we can use \cite[Proposition 2.1 and Lemma 3.7]{Sung} to see that there exists $v_{\e,m} \in W^{1,2}(B\times S^r(\rho))$ such that $v_{\e,m}(\pi_P(p),x) = \tilde u_{\e,m}^{\circledast}(p,x)$. In order to conclude the proof, we will show that 
    \[\inf_{V_m(B\times S^r(\rho))}E_\e \leq E_\e(v_{\e,m}) \leq E_\e(u_{\e,m}) + \frac{ C \delta}{\e\vol(G)},\] 
which implies the first stated inequality, by letting $\delta \downarrow 0$ and recalling that $u_{\e,m}$ is a minimizer. To show the first inequality, it suffices to prove that $v_{\e,m} \in V_m(B \times S^r(\rho))$. For this purpose, note that
    \begin{align*}
    \int_{P\times S^r(\rho)} \tilde u_{\e,m}^{\circledast}\,d\mu_{P\times S^r(\rho)} & = \frac{1}{\vol(G)} \int_G \int_{P\times S^r(\rho)}\tilde u_{\e,m}^*(p\cdot g,x) \,d\mu_{P\times S^r(\rho)}(p,x)\,d\mu_{g_G}(g) \\
    & = \frac{1}{\vol(G)} \int_G \int_{P\times S^r(\rho)}\tilde u_{\e,m}^*(p,x) \,d\mu_{P\times S^r(\rho)}(p,x)\,d\mu_{g_G}(g) = \vol(G)\cdot m,
    \end{align*}
where we used Fubini's Theorem, the fact that $G$ acts isometrically on $(P,g_P)$ and \eqref{integral pullback}. Hence, by Fubini's Theorem for the products $P \times S^r(\rho)$ and $B\times S^r(\rho)$ and for the bundle $P \to B$ and using that the fibers $P_b=\pi_P^{-1}(b)$ have volume $= \mathrm{vol}(G)$ for all $b \in B$, we get
    \begin{align*}
    \vol(G)\cdot m &= \int_{P\times S^r(\rho)} v_{\e,m}(\pi_P(p),x)\,d\mu_{P\times S^r(\rho)}(p,x)\\
    & = \int_{S^r(\rho)} \int_P v_{\e,m} (\pi_P(p),x)\,d\mu_{g_P}(p)\,d\mu_{S^r(\rho)}(x)\\
    & = \int_{S^r(\rho)}\left( \int_B\int_{P_b} v_{\e,m} (b,x)\,d\mu_{P_b}\,d\mu_{g_B}(b)\right)\,d\mu_{S^r(\rho)}(x) \\
    & = \vol(G) \int_{B\times S^r(\rho)} v_{\e,m}(b,x)\,d\mu_{B\times S^r(\rho)}(b,x).
    \end{align*}
Therefore $v_{\e,m} \in V_m(B \times S^r(\rho))$. 

We now estimate the energy of $v_{\e,m}$ in terms of the symmetrized and averaged functions. First, by arguing as in the previous computation (for $W(\tilde u_{\e,m}^{\circledast})$ and $W(v_{\e,m})$) and by using \eqref{potential averaging}, \eqref{potential symmetrization} and \eqref{Fubini PxS}, we get
    \begin{align*}
      \vol(G)\cdot \int_{B\times S^r(\rho)}W(v_{\e,m}) \,d\mu_{B\times S^r(\rho)} &=      \int_{P\times S^r(\rho)}W(\tilde u_{\e,m}^{\circledast}) \,d\mu_{P\times S^r(\rho)} \\
       & \leq  \int_{P\times S^r(\rho)} W(\tilde u_{\e,m}^*)\,d\mu_{P\times S^r(\rho)} + C\cdot \delta\\
       & = \int_{P\times S^r(\rho)} W(\tilde u_{\e,m})\,d\mu_{P\times S^r(\rho)} + (C+C_1)\cdot \delta\\
       & = \int_{P\times S^r(\rho)}\left( W(u_{\e,m})\circ \tilde\pi\right) \,d\mu_{P\times S^r(\rho)} + (C+C_1)\cdot \delta\\
       & =  \vol(G) \cdot \int_E W(u_{\e,m})\,d\mu_{g_E^\rho} + (C+C_1)\cdot \delta.
    \end{align*}
On the other hand, since $\tilde u_{\e,m}^{\circledast}$ is constant along the fibers of $\pi_P \times \mathrm{id}_{S^r(\rho)}$ and $\Omega_G$ restricts to volume forms of these fibers, we have
    \[
        |d\tilde u_{\e,m}^{\circledast}\wedge \Omega_G|_{P\times S^r(\rho)}^2 = |d\tilde u_{\e,m}^{\circledast}|^2_{P\times S^r(\rho)}.
    \]
Hence, using \eqref{Dirichlet norm bundle}, \eqref{Dirichlet averaging}, \eqref{Dirichlet symmetrization} and \eqref{Fubini derivatives PxS}
    \begin{align*}
          \vol(G)\cdot \int_{B\times S^r(\rho)}|\nabla v_{\e,m}|^2 \,d\mu_{B\times S^r(\rho)} &=      \int_{P\times S^r(\rho)}|\nabla \tilde u_{\e,m}^{\circledast}|^2_{P\times S^r(\rho)} \,d\mu_{P\times S^r(\rho)} \\
        & = \int_{P\times S^r(\rho)}|d\tilde u_{\e,m}^{\circledast}\wedge \Omega_G|_{P\times S^r(\rho)}^2\,d\mu_{P\times S^r(\rho)}\\
       & \leq  \int_{P\times S^r(\rho)} |d\tilde u_{\e,m}^*\wedge \Omega_G|^2\,d\mu_{P\times S^r(\rho)}\\
       &\leq \int_{P\times S^r(\rho)} |d\tilde u_{\e,m}\wedge \Omega_G|^2\,d\mu_{P\times {S^r(\rho)}} + C_0\delta \\
       & = \int_{P\times S^r(\rho)} |d(u_{\e,m}\circ \tilde \pi)\wedge \Omega_G|^2\,d\mu_{P\times {S^r(\rho)}} + C_0\delta \\
       & = \mathrm{vol}(G) \cdot \int_E|\nabla u_{\e,m}|^2_{g_\rho} \,d\mu_{g_E^\rho} + C_0\delta.
    \end{align*}
Therefore, 
    \[
    E_\e(v_{\e,m}) \leq E_\e(u_{\e,m}) + \frac{1}{\vol(G)}\left(\frac{\e}{2}C_0 +\frac{C_1+C}{\e}\right)\delta.
    \]

This proves the claimed energy estimate and concludes the proof of the comparison of the minimizers of the constrained Allen-Cahn energies. The comparison of the isoperimetric profiles then follows by taking the limit as $\e \downarrow 0$ and using the results of \cite{Modica} described in Subsection \ref{subsec:ac}.
\end{proof}

\begin{rmk}
The fiberwise symmetrization method explored by Sung can also be applied to compact Riemannian bundles with totally geodesic fibers isometric to $(F,g_F)$, provided that $g_F$ satisfies suitable Ricci curvature bounds. These bounds allow one to compare the isoperimetric profiles of the typical fiber with model spaces and derive energy estimates for such bundles. It is possible to use the strategy employed in this section to obtain more general comparison results in the spirit of the aforementioned work by Morgan-Howe-Harman \cite{MHH}, and also for the first Allen-Cahn and Almgren-Pitts widths. We plan to address this in our upcoming work.
\end{rmk}

\section{Applications}

\subsection{The width of Berger spheres} \label{subsec:berger}

Recall the one-parameter family of \emph{Berger metrics} on $S^3 = \{z=(z_1,z_2) \in \mathbb{C}^2 \colon |z_1|^2+|z_2|^2=1\}$:
\[
\left\langle v,w \right\rangle_{\tau} = \left\langle u,v \right\rangle - (1-\tau^2) \left\langle v,iz\right\rangle \left\langle w, iz\right\rangle, \qquad v,w \in T_zS^2,
\]
where $\left\langle \cdot, \cdot \right\rangle$ is the standard Euclidean metric in $\mathbb{C}^2$. For every $\tau \in \R_{>0}$, this defines a homogeneous metric on $S^3$ with $4$-dimensional isometry group, with $\left\langle\cdot,\cdot\right\rangle_1=\left\langle\cdot,\cdot\right\rangle=$ round metric on $S^3$. Moreover, the standard Hopf fibration $\pi \colon S^3 \to S^2(1/2)$ given by
\[
\pi(z_1,z_2) = \left(z_1\bar z_2, \frac{|z_1|^2-|z_2|^2}{2}\right)
\]
onto the $2$-sphere of radius $(1/2)$ is a Riemannian submersion with respect to $\left\langle\cdot,\cdot\right\rangle_\tau$ (see e.g. \cite[Section 2.1]{TorralboUrbano12}). Its fibers are closed geodesics in $(S^3,\left\langle\cdot,\cdot\right\rangle_\tau)$ of length $2\pi \tau$. 

Let $S^3_{\tau}$ denote $S^3$ with the Berger metric $\left\langle\cdot,\cdot\right\rangle_\tau$. The standard equatorial sphere $S^2=\{(z_1,z_2) \in S^3 \colon\ \mathrm{Im}z_2 =0\}$ -- and hence any other equatorial sphere, as $S^3_\tau$ is homogeneous -- and the Clifford torus $T=S^1(1/\sqrt{2})\times S^1(1/\sqrt{2})$ are minimal surfaces in $S^3_\tau$ for every $\tau$, and their area were explicitly computed by Torralbo \cite[Proposition 2]{Torralbo}. Moreover, Torralbo-Urbano \cite{TUIndex} showed that the equatorial spheres have Morse index 1 for any $\tau \in (0, 1]$, the Clifford tori have Morse index $1$ for any $\tau \in (0,1/\sqrt{3}]$, and Morse index $5$ for any $\tau \in (1/\sqrt{3}, 1)$. 

As an application of \ref{thm:width_fibration} and \ref{isoperimetric bounds}, we will prove \ref{corollary Berger}, which we recall here: for $\tau<\frac{1}{\pi}$, the first $3$ widths are achieved by the minimal Clifford tori in $S^3_\tau$, and that the equatorial spheres cannot achieve low $p$-widths $\omega_p(S^3_\tau)$ provided $\tau$ is sufficiently small. Note that this strongly contrasts with the Simon-Smith widths obtained by constraining the topology of the sweepout surfaces to be $2$-spheres. As pointed out in the introduction, this has recently been studied in connection with Zoll metrics on $S^3$ \cite{AmbrozioMarquesNeves1,AmbrozioMarquesNeves2}.\medskip

\begin{proof}[Proof of \ref{corollary Berger}]
By the computation of the $p$-widths of the $2$-sphere by Chodosh-Mantoulidis \cite{CMSurfaces}, we get
\[
\omega_p(S_\tau^3) \leq 2\pi\tau\cdot \omega_p(S^2(1/2)) = 2\pi\tau \cdot \frac{1}{2} \cdot 2\pi\lfloor\sqrt{p}\rfloor = 2\pi^2\tau\lfloor \sqrt{p} \rfloor.
\]
In particular,
\[
\omega_1(S^3_\tau) \leq \omega_2(S^3_\tau) \leq \omega_3(S^3_\tau) \leq 2\pi^2 \tau = \area_{S^3_\tau}(T),
\]
for any value of $\tau$. \medskip

We now show that $\omega_1(S^3_\tau)\geq \area_{S^3_\tau}(T)=2\pi^2\tau$, for $\tau \in(0,\frac{1}{\pi})$ using the result discussed in the previous section and the isoperimetric properties of $S^2(1/2)\times S^1(\tau)$. These properties were previously observed by Viana in \cite{Viana}, based on the results of \cite{MHH,PedrosaRitore}.

By Theorem \ref{thm:lower_bound}, we see that the isoperimetric profiles of $S^3_\tau$ and $S^2(1/2)\times S^1(\tau)$ satisfy
    \[
    I_{S ^3_\tau}(V) \geq I_{S^2(1/2)\times S^1(\tau)}(V), \qquad \text{for all} \quad V \in (0,\vol(S^3_\tau)).
    \]
From \cite[Theorem 4.3]{PedrosaRitore} (see the analysis on pages 1386-1388 of \cite{PedrosaRitore}) or also \cite[Theorem 23]{Ros}, we see that the torus $S^1(1/2) \times S^1(\tau)$ bounds an isoperimetric domain for half-volume in $S^2(1/2)\times S(\tau)$ provided $(0,\frac{1}{\pi})$. Hence, for this range of values for $\tau$,
    \begin{align*}
    I_{S^2(1/2)\times S^1(\tau)}\left({\textstyle \frac{1}{2}}\vol(S^3_\tau)\, \right)& =I_{S^2(1/2)\times S^1(\tau)}\left({\textstyle \frac{1}{2}}\vol(S^2(1/2)\times S^1(\tau))\, \right) \\
    & = \area_{S^3_\tau}(S^1(1/2)\times S^1(\tau))= \area_{S^3_\tau}(T).
    \end{align*}
    
On the other hand, $T$ bounds a half-volume domain in $S^3_\tau$, so $I_{S ^3_\tau}({\textstyle \frac{1}{2}}\vol(S^3_\tau))\leq \area_{S^3_\tau}(T)$, and the equality holds. Since $S^3_\tau$ contains no stable minimal surfaces for $\tau \in (0,2)$ (as it is simply connected and has positive Ricci curvature), it follows from Lemma \ref{leastarea_isoperimetric} that 
\[
\omega_1(S^2(1/2)\times S^1(\tau)) = \area_{S^3_\tau}(T) = 2\pi^2 \tau.
\]
Therefore,
\[
\omega_1(S^3_\tau) = \omega_2(S^3_\tau) = \omega_3(S^3_\tau) = \area_{S^3_\tau}(T)  = 2\pi^2 \tau, \qquad \text{for any} \ \tau \in \left(0, \frac{1}{\pi} \right).
\]

Concerning the minimal equatorial spheres in $S^3_\tau$, we observe that their area is bounded from below by $2\pi$ (see Proposition 2 in \cite{Torralbo}). Hence, for every $p \in \Z_{>0}$, if $\tau < \frac{1}{\pi\lfloor \sqrt{p}\rfloor}$, then 
\[
\vol_{S^3_\tau}(S^2)>\omega_p(S^3_\tau).
\]
\end{proof}
%In particular, $\vol_{S^3_\tau}(S^2)>\omega_4(S^3_\tau)$ for any $\tau <1/(4\pi^2)$.

\subsection{Product of spheres} \label{width product spheres} In this subsection, we use $|\cdot|$ to denote the area functional.

As another application of (a special case of) \ref{thm:width_fibration}, we will use the characterization of low index minimal hypersurfaces in the product of spheres by Torralbo-Urbano \cite{TorralboUrbano} and Batista-Martins \cite{BM} (see also H. Chen \cite{ChenH}) and the solution of the half-volume isoperimetric problem in these spaces \cite{Ros}, in order to compute the low widths of product of spheres, as described in \ref{corollary product spheres}.\smallskip

We consider the Riemannian product of spheres $S^{n_1}\times S^{n_2}(a)$ with $n_1 \geq n_2 \geq 2$ and $a>0$. If we apply \ref{thm:width_fibration} for the projections onto each factor, we get the following upper bound for the $p$-widths:
\begin{equation} \label{upper bound spheres}
\omega_p(S^{n_1} \times S^{n_2}(a)) \leq \min\left\{\ \omega_p(S^{n_1}) \cdot a^{n_2}|S^{n_2}|\ , \ a^{n_2-1}\omega_p(S^{n_2})\cdot |S^{n_1}|  \ \right\}.
\end{equation}
Consider the totally geodesic (hence minimal) hypersurfaces in $S^{n_1} \times S^{n_2}(a)$:
\[
\Sigma_1=S^{n_1-1} \times S^{n_2}(a) \quad \text{and} \quad \Sigma_2=S^{n_1} \times S^{n_2-1}(a),
\]
We have the following area comparison between $\Sigma_1$ and $\Sigma_2$, which will be proved at the end of this subsection.

\begin{lem}\label{areacomparsion}
Let $a_0(n_1,n_2) = \frac{|S^{n_1}|/|S^{n_1-1}|}{|S^{n_2}|/|S^{n_2-1}|}$, where we recall $n_1\geq n_2\geq 2$. Then
    \[
    \sqrt{\frac{n_2-1}{n_1}}<a_0(n_1,n_2) <\sqrt{\frac{n_2}{n_1-1}}.
    \]
In particular, we have:
\begin{equation*}
\begin{aligned}
|\Sigma_1| < |\Sigma_2| &\qquad \text{when}\quad  a^2 \leq \frac{n_2-1}{n_1}; \\
|\Sigma_1| > |\Sigma_2| &\qquad \text{when}\quad a^2 \geq \frac{n_2}{n_1-1} .
\end{aligned}
\end{equation*}    
\end{lem}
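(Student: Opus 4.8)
The plan is to reduce everything to the elementary estimate on the ratio $a_0(n_1,n_2)=\bigl(|S^{n_1}|/|S^{n_1-1}|\bigr)\big/\bigl(|S^{n_2}|/|S^{n_2-1}|\bigr)$ and then read off the area comparison. First I would record the volumes of the two totally geodesic hypersurfaces:
\[
|\Sigma_1| = |S^{n_1-1}|\cdot a^{n_2}|S^{n_2}|, \qquad |\Sigma_2| = |S^{n_1}|\cdot a^{n_2-1}|S^{n_2-1}|,
\]
so that
\[
\frac{|\Sigma_1|}{|\Sigma_2|} = \frac{|S^{n_1-1}|\,|S^{n_2}|}{|S^{n_1}|\,|S^{n_2-1}|}\cdot a = \frac{a}{a_0(n_1,n_2)}.
\]
Hence $|\Sigma_1|<|\Sigma_2|$ iff $a<a_0$, and $|\Sigma_1|>|\Sigma_2|$ iff $a>a_0$; in particular the two claimed inequalities will follow immediately once we know $\sqrt{(n_2-1)/n_1}<a_0<\sqrt{n_2/(n_1-1)}$, since $a^2\le (n_2-1)/n_1$ forces $a< a_0$ and $a^2\ge n_2/(n_1-1)$ forces $a>a_0$.

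Next I would prove the two-sided bound on $a_0$. The key input is the classical recursion $|S^{k}| = |S^{k-1}|\cdot \int_{-\pi/2}^{\pi/2}\cos^{k-1}\theta\,d\theta$, equivalently $|S^k|/|S^{k-1}| = \sqrt{\pi}\,\Gamma\!\bigl(\tfrac{k+1}{2}\bigr)/\Gamma\!\bigl(\tfrac{k+2}{2}\bigr)$, so that
\[
a_0(n_1,n_2) = \frac{\Gamma\!\bigl(\tfrac{n_1+1}{2}\bigr)\Gamma\!\bigl(\tfrac{n_2+2}{2}\bigr)}{\Gamma\!\bigl(\tfrac{n_1+2}{2}\bigr)\Gamma\!\bigl(\tfrac{n_2+1}{2}\bigr)}.
\]
Now I would use the standard Gamma-function inequality $\sqrt{x}<\Gamma(x+1)/\Gamma(x+\tfrac12)<\sqrt{x+\tfrac12}$ (Wendel/Gautschi-type bounds, provable by log-convexity of $\Gamma$), applied with $x=\tfrac{n_1-1}{2}$ in the numerator factor $\Gamma(\tfrac{n_1+1}{2})/\Gamma(\tfrac{n_1+2}{2})^{-1}$-type ratio and with $x=\tfrac{n_2-1}{2}$ in the denominator. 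Care with which Gamma sits on top: writing $b_k:=|S^k|/|S^{k-1}|=\sqrt{\pi}\,\Gamma(\tfrac{k+1}{2})/\Gamma(\tfrac{k+2}{2})$, Gautschi's inequality gives $\sqrt{\pi}\,(\tfrac{k+1}{2})^{-1/2}\cdot(\text{const})$; more cleanly, $\frac{k-1}{?}$... the robust route is: from log-convexity, $\Gamma(\tfrac{k+1}{2})^2\le \Gamma(\tfrac{k}{2})\Gamma(\tfrac{k+2}{2})$ and $\Gamma(\tfrac{k+2}{2})^2\le \Gamma(\tfrac{k+1}{2})\Gamma(\tfrac{k+3}{2})$, which translate into $b_{k-1}b_{k+1}\le b_k^2$-type monotonicity of the sequence $k\,b_k^2$, and the explicit identity $b_k b_{k+1}=2\pi/k$ (since $|S^{k+1}|/|S^{k-1}|=2\pi/k$). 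Combining $b_kb_{k+1}=2\pi/k$ with the monotonicity $b_{k}>b_{k+1}$ yields $\sqrt{2\pi/k}<b_k$, hmm that's not quite matched to the target either — so I would instead directly verify $\frac{n_2-1}{n_1}<a_0^2<\frac{n_2}{n_1-1}$, i.e. $(n_2-1)b_{n_1}^2<n_2 b_{n_2}^2$ rearranged appropriately, by squaring $a_0=b_{n_1}/b_{n_2}$ and plugging in the bounds $\tfrac{2\pi}{n_1}<b_{n_1}^2<\tfrac{2\pi}{n_1-1}$ and likewise for $n_2$; these last bounds come from $b_kb_{k+1}=2\pi/k$, $b_{k-1}b_k=2\pi/(k-1)$ together with $b_{k+1}<b_k<b_{k-1}$.

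The main obstacle is getting the two-sided bound $\tfrac{n_2-1}{n_1}<a_0^2<\tfrac{n_2}{n_1-1}$ \emph{sharp enough} from soft monotonicity alone: the naive bound $\sqrt{2\pi/n}<b_n<\sqrt{2\pi/(n-1)}$ gives $a_0^2\in\bigl(\tfrac{n_2-1}{n_1},\tfrac{n_2}{n_1-1}\bigr)$ only after one checks the endpoints don't collide, which is automatic since $n_1\ge n_2\ge 2$ makes the interval nonempty, but one must be careful that the lower estimate on $b_{n_1}$ pairs with the upper estimate on $b_{n_2}$ and vice versa; I would lay this out explicitly. Everything else — the volume formulas for $\Sigma_1,\Sigma_2$, the ratio identity, and the final two implications — is routine bookkeeping. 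For $n_1+n_2\le 7$ one can alternatively just compute $a_0$ in the finitely many cases if a fully elementary argument is preferred, but the Gamma/log-convexity argument handles all dimensions uniformly.
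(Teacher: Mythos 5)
Your plan is correct and reaches the same conclusion as the paper, but the route to the key two-sided bound on $a_0$ is packaged a bit differently. The paper computes $a_0(n_1,n_2)=\bigl(\Gamma(\tfrac{n_2+1}{2})/\Gamma(\tfrac{n_2}{2})\bigr)\big/\bigl(\Gamma(\tfrac{n_1+1}{2})/\Gamma(\tfrac{n_1}{2})\bigr)$ and then directly invokes Gautschi's inequality $\sqrt{\tfrac{n-1}{2}}<\Gamma(\tfrac{n+1}{2})/\Gamma(\tfrac{n}{2})<\sqrt{\tfrac{n}{2}}$, pairing the lower bound in the numerator with the upper bound in the denominator and vice versa. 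You instead set $b_k=|S^k|/|S^{k-1}|$ and derive $\tfrac{2\pi}{k}<b_k^2<\tfrac{2\pi}{k-1}$ from the identity $b_kb_{k+1}=|S^{k+1}|/|S^{k-1}|=2\pi/k$ together with the strict monotonicity $b_{k+1}<b_k$. This is logically equivalent to Gautschi but more self-contained: the product identity is the elementary recursion $|S^{k+1}|=\tfrac{2\pi}{k}|S^{k-1}|$, and the monotonicity has a one-line proof (e.g.\ from $|S^k|=|S^{k-1}|\int_{-\pi/2}^{\pi/2}\cos^{k-1}\theta\,d\theta$ and the fact that $\int\cos^{k-1}$ is strictly decreasing in $k$, or from log-convexity of $\Gamma$ as you note). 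The pairing of bounds you describe — lower for $b_{n_1}$ with upper for $b_{n_2}$ for the lower bound on $a_0^2$, and the reverse — is exactly right. The reduction $|\Sigma_1|/|\Sigma_2|=a/a_0$ and the final two implications are the same as in the paper.

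One small slip to fix: you write $b_k=\sqrt{\pi}\,\Gamma(\tfrac{k+1}{2})/\Gamma(\tfrac{k+2}{2})$, but the correct formula is $b_k=\sqrt{\pi}\,\Gamma(\tfrac{k}{2})/\Gamma(\tfrac{k+1}{2})$ (an off-by-one in the indices), and correspondingly $a_0=\Gamma(\tfrac{n_1}{2})\Gamma(\tfrac{n_2+1}{2})\big/\bigl(\Gamma(\tfrac{n_1+1}{2})\Gamma(\tfrac{n_2}{2})\bigr)$ rather than the expression you displayed. Your stated formula would give $b_kb_{k+1}=2\pi/(k+1)$, contradicting the (correct) identity $b_kb_{k+1}=2\pi/k$ that you use afterwards. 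Since the argument you actually run relies on the product identity and monotonicity, not on the explicit Gamma form, the final bounds are unaffected — but the intermediate display should be corrected, and the monotonicity $b_{k+1}<b_k$ deserves an explicit sentence of justification rather than a gesture.
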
\medskip

In addition, we record the following computation that will be used in the proof:
    \begin{equation} \label{quotient areas Sigma12}
        \frac{|\Sigma_1|}{|\Sigma_2|} = \frac{|S^{n_1-1}|\cdot a^{n_2}|S^{n_2}|}{|S^{n_1}|\cdot a^{n_2-1}|S^{n_2-1}|} = a\cdot \frac{|S^{n_2}|/|S^{n_2-1}|}{|S^{n_1}|/|S^{n_1-1}|} =\frac{a}{a_0(n_1,n_2)}.
    \end{equation}\medskip

\begin{proof}[Proof of \ref{corollary product spheres}]
We will show that we can choose $a_-(n_1,n_2) := \sqrt{\frac{n_2-1}{n_1}}$ and $a_+(n_1,n_2)=\sqrt{\frac{n_2}{n_1-1}}$.\smallskip

Suppose $0<a \leq \sqrt{\frac{n_2-1}{n_1}}$. By Lemma \ref{areacomparsion}, we have $|\Sigma_1| < |\Sigma_2|$. Since
\[
\omega_{n_1+1}(S^{n_1}) \cdot a^{n_2}|S^{n_2}| = |S^{n_1-1}|\cdot a^{n_2}|S^{n_2}|=|\Sigma_1|,
\]
and
\[
a^{n_2-1}\omega_{n_1+1}(S^{n_2})\cdot |S^{n_1}| \geq a^{n_2-1}\omega_1(S^{n_2})\cdot |S^{n_1}| = a^{n_2-1}|S^{n_2-1}|\cdot |S^{n_1}| =|\Sigma_2| >|\Sigma_1|,
\]
by \eqref{upper bound spheres}, we have
\begin{equation} \label{width small radius}
\omega_{n_1+1}(S^{n_1}\times S^{n_2}(a)) \leq |\Sigma_1|.
\end{equation}
On the other hand, by Theorem \ref{widths are achieved}, the $1$-width $\omega_1(S^{n_1} \times S^{n_2}(a))$ is achieved by an integer combination of the areas of minimal hypersurfaces with optimal regularity and total index $\leq 1$. By \cite[Theorem A]{BM}, the only such minimal hypersurfaces are $\Sigma_1$ and $\Sigma_2$, for $a$ in this range. Therefore, we have $\omega_1(S^{n_1}\times S^{n_2}(a))\geq |\Sigma_1|$ and
\begin{equation} 
\omega_1(S^{n_1}\times S^{n_2}(a)) =\ldots= \omega_{n_1+1}(S^{n_1}\times S^{n_2}(a)) =|\Sigma_1|.
\end{equation}\medskip

Now consider $a \geq \sqrt{\frac{n_2}{n_1-1}}$. By Lemma \ref{areacomparsion} above, we have $|\Sigma_1| > |\Sigma_2|$. Now we can use
\[
a^{n_2-1}\omega_{n_2+1}(S^{n_2}) \cdot |S^{n_1}| = a^{n_2-1}|S^{n_2-1}|\cdot |S^{n_1}|=|\Sigma_2|,
\]
and
\[
\omega_{n_2+1}(S^{n_1}) \cdot a^{n_2}|S^{n_2}| \geq \omega_1(S^{n_1}) \cdot a^{n_2}|S^{n_2}| = |S^{n_1-1}|\cdot a^{n_2}|S^{n_2}|= |\Sigma_1| > |\Sigma_2|,
\]
to conclude
\begin{equation} \label{width large radius}
\omega_{n_2+1}(S^{n_1}\times S^{n_2}(a)) \leq |\Sigma_2|.
\end{equation}
Again, by Theorem \ref{widths are achieved}, the $1$-width $\omega_1(S^{n_1} \times S^{n_2}(a))$ is achieved by an integer combination of the areas of minimal hypersurfaces with optimal regularity and index $\leq 1$. But \cite[Theorem B]{BM} shows that the only such minimal hypersurfaces are $\Sigma_1$ and $\Sigma_2$. Thus, have $\omega_1(S^{n_1}\times S^{n_2}(a))\geq |\Sigma_2|$ and
\begin{equation} 
\omega_1(S^{n_1}\times S^{n_2}(a)) =\ldots= \omega_{n_2+1}(S^{n_1}\times S^{n_2}(a))=|\Sigma_2|.
\end{equation}\smallskip

Now we address the low dimensional cases, namely we assume hereafter $3\leq n_1+n_2\leq 7$ and prove the last statement in \ref{corollary product spheres}. By \cite[Corollary 1]{TorralboUrbano}, we see that $S^{n_1}\times S^{n_2}(a)$ admits no stable minimal hypersurfaces. Therefore, by combining the observation above and Lemma \ref{leastarea_isoperimetric}, we see that the least area minimal hypersurface in $S^{n_1}\times S^{n_2}(a)$ is given by either $\Sigma_1$ or $\Sigma_2$. Therefore,
    \[\omega_1(S^{n_1}\times S^{n_2}(a)) \geq \min\{|\Sigma_1|,|\Sigma_2|\}.\]
Again, inequalities \eqref{width small radius} and \eqref{width large radius} remain valid under the assumptions $|\Sigma_1|\leq |\Sigma_2|$ and $|\Sigma_1|\geq |\Sigma_2|$, respectively. Since these are equivalent to $0<a \leq a_0(n_1,n_2)$ and $a\geq a_0(n_1,n_2)$, we conclude that
    \[
    \omega_1(S^{n_1}\times S^{n_2}(a)) =\ldots= \omega_{n_1+1}(S^{n_1}\times S^{n_2}(a)) =|\Sigma_1|, \qquad \text{for} \quad a\in (0, a_0(n_1,n_2)]
    \]
and
    \[
    \omega_1(S^{n_1}\times S^{n_2}(a)) =\ldots= \omega_{n_2+1}(S^{n_1}\times S^{n_2}(a))=|\Sigma_2|, \qquad \text{for} \quad a\in [a_0(n_1,n_2),\infty).
    \]
which finishes the proof of \ref{corollary product spheres}.
\end{proof}

\begin{rmk}
In the case $n_1>n_2=1$, by \cite[Corollary 1]{TorralboUrbano} we see that the only connected, embedded, stable minimal hypersurfaces in $S^{n_1}\times S^{1}(a)$ are the totally geodesic slices $S^{n_1}\times \{q\}$, for $q \in S^1(a)$. In particular, any stable minimal hypersurface has area $\geq |S^{n_1}| = \frac{1}{2}|\Sigma_2|$. On the other hand, it follows from \cite[Theorem 23]{Ros} that the half-volume isoperimetric domains in $S^{n_1}\times S^{n_2}(a)$ correspond to the domain bounded by either $\Sigma_1$, if $|\Sigma_1|\leq |\Sigma_2|$, or by $\Sigma_2$, if $|\Sigma_2|\leq |\Sigma_1|$. 

If we let $a_0(n_1,1)=\frac{|S^{n_1}|/|S^{n_1-1}|}{\pi}$, then for any $a \in (0,\frac{1}{2}a_0(n_1,1)]$, equation \eqref{quotient areas Sigma12} shows that $|\Sigma_1| \leq \frac{1}{2}|\Sigma_2|$. Therefore, by Lemma \ref{leastarea_isoperimetric}, we see that $\Sigma_1$ is a least area minimal hypersurface and
    \[
    \omega_1(S^{n_1}\times S^{1}(a)) \geq |\Sigma_1|.
    \]
Using \ref{thm:width_fibration} for the projection $S^{n_1}\times S^1(a)\to S^{n_1}$ and the knowledge of the first $(n_1+1)$ widths of $S^{n_1}$, we conclude:
  \[
\omega_1(S^{n_1}\times S^{1}(a)) =\ldots= \omega_{n_1+1}(S^{n_1}\times S^{1}(a)) =|\Sigma_1|.
    \]
On the other hand, we observe that the geometry of $S^{n_1} \times S^1(a)$ has different features for large $a$ -- compare with Berger spheres $S^3_\tau$ with $\tau>1$, as observed in \cite{TUIndex}.
\end{rmk}\medskip

We observe that \ref{corollary product spheres} extends the width computation in \cite[Theorem C]{BM} in the following directions. First, by relying on the upper bound from \ref{thm:width_fibration} and on the knowledge of the first $n+1$ widths of the round sphere $S^n$, we can compute the first $(n_1+1)$ widths of $S^{n_1}\times S^{n_2}(a)$ for small $a$, and the first $(n_2+1)$ widths of $S^{n_1}\times S^{n_2}(a)$ for large $a$. Second, since our computation does not rely on the smoothness of the min-max minimal hypersurfaces that achieve the widths, we are able to extend it to any dimension $n_1+n_2 \geq 3$, provided $a$ lies in a suitable range (compare with \cite[Remark 6.2]{BM}). Lastly, in low dimensions, we bypass the the potential issue of the existence of other index 1 minimal hypersurfaces and compute the first few widths of $S^{n_1}\times S^{n_2}(a)$ for all values of $a>0$ by relying on the connection to the isoperimetric problem.\medskip

For completeness, we also mention the general upper bound for $n_2=2$:
\[
\omega_p(S^{n_1}\times S^{2}(a)) \leq |S^{n_1}|\cdot 2\pi  a\cdot \lfloor \sqrt{p}\rfloor,
\]
which follows from \eqref{upper bound spheres} and the computation of the $p$-widths of the $2$-sphere by Chodosh-Mantoulidis \cite{CMSurfaces}, and the specific bounds for $n_2=3$:
\[
\omega_7(S^{n_1}\times S^3(a)) \leq |S^{n_1}| \cdot a^2\cdot 2\pi^2 \quad \text{and} \quad \omega_{13}(S^{n_1}\times S^{3}(a)) < |S^{n_1}|\cdot 8a^2\pi 
\]
which follow from \eqref{upper bound spheres} and the computation of low widths of the $3$ sphere by Marques-Neves \cite{MarquesNevesWillmore} and more recently by Chu-Li \cite{CPChuLi}.

We now address the area comparison between $\Sigma_1$ and $\Sigma_2$. 

\begin{proof}[Proof of Lemma \ref{areacomparsion}]

Using the closed expression for the area of $S^n$ in terms of Gamma functions, we can compute
\[
a_0(n_1,n_2)=\frac{\Gamma(\frac{n_2+1}{2})/\Gamma(\frac{n_2}{2})}{\Gamma(\frac{n_1+1}{2})/\Gamma(\frac{n_1}{2})}.
\]
Hence, it suffices to show that
\begin{equation} \label{Gautschi}
\sqrt{\frac{n_2-1}{n_1}}<\frac{\Gamma(\frac{n_2+1}{2})/\Gamma(\frac{n_2}{2})}{\Gamma(\frac{n_1+1}{2})/\Gamma(\frac{n_1}{2})} <\sqrt{\frac{n_2}{n_1-1}}.
\end{equation}
This inequality follows from Gautschi's inequality (which uses the logarithmic convexity of the Gamma function, see e.g. \cite{Qi}):
\[\sqrt{\frac{n_2-1}{2}}<\Gamma(\frac{n_2+1}{2})/\Gamma(\frac{n_2}{2}) < \sqrt{\frac{n_2}{2}},\]
\[\sqrt{\frac{n_1-1}{2}}<\Gamma(\frac{n_1+1}{2})/\Gamma(\frac{n_1}{2}) < \sqrt{\frac{n_1}{2}},\]
which readily implies \eqref{Gautschi}. The conclusions about the comparison between the areas of $\Sigma_1$ and $\Sigma_2$ follow from computing the quotient of the areas of $\Sigma_1$ and $\Sigma_2$, see \eqref{quotient areas Sigma12} above.
\end{proof}

\bibliographystyle{amsalpha}
\bibliography{main}

\end{document}